\theoremstyle{plain}
\newtheorem{thm}{Theorem}[section]
\newtheorem{prop}[thm]{Proposition}
\newtheorem{lemma}[thm]{Lemma}
\theoremstyle{definition}
\newtheorem{rem}[thm]{Remark}
\newtheorem*{oldproof}{Proof}
\renewenvironment{proof}[1][{}]{\begin{oldproof}[#1]}{\qed\end{oldproof}}
\newcommand{\Q}{\mathbb{Q}}
\newcommand{\F}{\mathbb{F}}
\newcommand{\Or}{\mathcal{O}}
\newcommand{\Gal}{\text{\textnormal{Gal}}}
\newcommand{\Hom}{\text{\textnormal{Hom}}}
\newcommand{\Aut}{\text{\textnormal{Aut}}}
\newcommand{\ord}{\text{\textnormal{ord}}}
\newcommand{\loc}{\text{\textnormal{loc}}}
\newcommand{\Pic}{\text{\textnormal{Pic}}}
\newcommand{\mdiv}{\text{\textnormal{div}}}
\newcommand{\rk}{\text{\textnormal{rk}}}
\newcommand{\Ima}{\text{\textnormal{Im}}}
\newcommand{\mc}[1]{\ensuremath{\mathcal{#1}}}
\newcommand{\mb}[1]{\ensuremath{\mathbb{#1}}}
\newcommand{\ra}{\rightarrow}
\newcommand{\xra}{\xrightarrow}
\newcommand{\mono}{\hookrightarrow}
\newcommand{\mur}{\ensuremath{{\rm ur}}}
\newcommand{\mres}{\ensuremath{{\rm Res}}}
\newcommand{\mcor}{\ensuremath{{\rm Cor}}}
\newcommand{\msel}{\ensuremath{{\rm Sel}}}
\newcommand{\legendre}[2]{\left(\frac{#1}{#2}\right)}
\DeclareFontFamily{U}{wncy}{}
\DeclareFontShape{U}{wncy}{m}{n}{<->wncyr10}{}
\DeclareSymbolFont{mcy}{U}{wncy}{m}{n}
\DeclareMathSymbol{\Sha}{\mathord}{mcy}{"58}
\begin{document}

\title{On Conjectural Rank Parities of Quartic and Sextic Twists of Elliptic Curves}
\author[M. Weidner]{Matthew Weidner}
\address{Computer Laboratory, University of Cambridge, 15 JJ Thomson Avenue\\
Cambridge CB3 0FD, United Kingdom}
\email{malw2@cam.ac.uk}
\date{May 30, 2019}
\thanks{Electronic version of an article published as \textit{Int.\ J.\ Number Theory} \textbf{15}(9) (2019) 1895--1918.  \url{https://doi.org/10.1142/S1793042119501057}  \textcopyright World Scientific Publishing Company. \url{https://www.worldscientific.com/worldscinet/ijnt}}
\subjclass[2010]{Primary: 11G05}
\keywords{Elliptic curves, Abelian varieties, Selmer ranks, Twists}

\begin{abstract}
We study the behavior under twisting of the Selmer rank parities of a self-dual prime-degree isogeny on a principally polarized abelian variety defined over a number field, subject to compatibility relations between the twists and the isogeny.  In particular, we study isogenies on abelian varieties whose Selmer rank parities are related to the rank parities of elliptic curves with $j$-invariant 0 or 1728, assuming the Shafarevich-Tate conjecture.  Using these results, we show how to classify the conjectural rank parities of all quartic or sextic twists of an elliptic curve defined over a number field, after a finite calculation.  This generalizes previous results of Hadian and Weidner on the behavior of $p$-Selmer ranks under $p$-twists.
\end{abstract}

\maketitle

\section{Introduction}\label{s:i}
The study of ranks of elliptic curves is an old and difficult Diophantine problem.  Assuming the Shafarevich-Tate conjecture, it is well-known how to compute the rank of a given elliptic curve over a number field, but many questions remain about the properties of ranks in general.  Much recent work has centered on the behavior of ranks in families of elliptic curves, especially families of quadratic twists.  For instance, Swinnerton-Dyer \cite{swinnerton_dyer} determines the distribution of 2-Selmer ranks, which provide an upper bound on rank, in families of elliptic curves with full rational 2-torsion.  Later results by Klagsbrun \cite{klagsbrun_partial_2_torsion} and Klagsbrun, Mazur, and Rubin \cite{kbr_no_2_torsion} cover other types of rational 2-torsion.  Additionally, these same authors \cite{kbr} determine the distribution of 2-Selmer rank parities among quadratic twists of any elliptic curve, and they find that odd and even 2-Selmer ranks are evenly distributed whenever the base field has a real embedding.  Since 2-Selmer rank parities are the same as rank parities if we assume the Shafarevich-Tate conjecture, their work provides good evidence for Goldfeld's conjecture \cite[Conjecture B]{goldfeld}, which says that the average rank in any family of quadratic twists over $\Q$ is $1/2$.

Another result along these lines is the following.  Let $K$ be a number field, and let $E$ be an elliptic curve defined over $K$.  In \cite{quadratic_twists}, building off of results in \cite{kbr, mazur_towers, mazur_rubin, poonen_rains, yu} and other papers, it is shown how to classify the 2-Selmer rank parities of all quadratic twists of $E$ as a function of the twisting parameter, after a finite calculation.  Assuming the Shafarevich-Tate conjecture, this classifies the rank parities as well.  Specifically, for any $d \in K^*$, let $E_d$ be the quadratic twist of $E$ by $d$, and let $d_2(E_d)$ denote the 2-Selmer rank of $E_d$.  Also let $\Sigma$ be a finite set of places of $K$ containing all infinite places, all places dividing 2, and all places of bad reduction for $E$.  For each place $v$ of $K$, let $K_v$ be the localization of $K$ at $v$, let $\mathcal{O}_v$ be the ring of integers in $K_v$, let $\pi_v$ be a (fixed) uniformizer for $v$, and let $\ord_v$ be the $v$-adic valuation.  Then we have the following result.

\begin{thm}[{\cite[Theorem 4.6 and Remark 4.9]{quadratic_twists}}]
Let $c, d \in K^*$ be such that for all places $v \in \Sigma$, $c \equiv d \pmod{(K_v^*)^2}$.  Equivalently: \begin{itemize}
  \item Define the integers $m_v$ for all finite places $v \in \Sigma$ by
  $$m_v := \begin{cases} 2e_{v/2} + 1 & \mbox{if $v | 2$} \\ 1 & \mbox{else,} \end{cases}$$
  where $e_{v/2}$ is the ramification index of $v$ over 2;
  \item Assume that for all finite places $v \in \Sigma$,
  \begin{itemize}
    \item $\ord_v(c) \equiv \ord_v(d) \pmod{2}$
    \item $c/(\pi_v^{\ord_v(c)})$ and $d/(\pi_v^{\ord_v(d)})$ have the same residues in $(\Or_v/v^{m_v})^*/((\Or_v/v^{m_v})^*)^2$;
  \end{itemize}
  \item And assume that $c$ and $d$ have the same signs at all real embeddings.
\end{itemize}
Then $d_2(E_c) \equiv d_2(E_d) \pmod{2}$.
\end{thm}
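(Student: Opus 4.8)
The plan is to realize both Selmer groups inside a single cohomology group, observe that their defining local conditions coincide at $\Sigma$, and reduce the parity statement to a sum of purely local contributions that vanishes by global duality. Since a quadratic twist of $E$ is obtained by twisting by a cocycle valued in $\Aut(E)=\{\pm1\}$ and $-1$ acts trivially on $E[2]$, there are canonical isomorphisms of $G_K$-modules $E_c[2]=E[2]=E_d[2]$, and likewise over every completion $K_v$. Under the induced identifications $H^1(K,E_c[2])=H^1(K,E[2])=H^1(K,E_d[2])$ and their local analogues, I would view $\msel_2(E_c/K)$ and $\msel_2(E_d/K)$ as subgroups of $H^1(K,E[2])$ cut out by the local conditions $\mathcal{L}_v^{c}:=\Ima\big(E_c(K_v)/2E_c(K_v)\to H^1(K_v,E[2])\big)$ and $\mathcal{L}_v^{d}$, each of which is its own orthogonal complement under the local Tate pairing (cup product followed by the Weil pairing). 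For $v\in\Sigma$ the hypothesis $c\equiv d\pmod{(K_v^*)^2}$ gives an isomorphism $E_c\xrightarrow{\sim}E_d$ over $K_v$ inducing the identity on $2$-torsion, so $\mathcal{L}_v^{c}=\mathcal{L}_v^{d}$; hence the two Selmer structures agree at every place of $\Sigma$ --- in particular at all archimedean places, all places above $2$, and all places of bad reduction of $E$ --- and can differ only at finite places $v\notin\Sigma$, there only when $c/d$ is not a square in $K_v^*$.

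Next I would apply a quadratic-twist comparison of $2$-Selmer ranks of a type going back to Kramer, in the form used in \cite{kbr} (following \cite{mazur_rubin,poonen_rains}), to write
\[
d_2(E_d)\ \equiv\ d_2(E_c)\ +\ \sum_{v}t_v\pmod 2 ,
\]
where $t_v\in\F_2$ depends only on $E\times_K K_v$ and on the classes of $c$ and $d$ in $K_v^*/(K_v^*)^2$: it measures the discrepancy between the maximal isotropic subspaces $\mathcal{L}_v^{c}$ and $\mathcal{L}_v^{d}$ (for instance one may take $t_v=\dim_{\F_2}E_c(K_v)/\mathrm{N}_{w/v}E_c(K_v(\sqrt{d/c}))$). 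Two facts are essential: $t_v=0$ whenever $E$ has good reduction at $v$, $v\nmid2$, and $\ord_v(c)\equiv\ord_v(d)\equiv0\pmod2$ --- so the sum is finite; and $t_v=0$ for every $v\in\Sigma$ by the first paragraph. The theorem is therefore equivalent to the assertion that $\sum_{v\notin\Sigma}t_v\equiv0\pmod 2$.

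To prove this I would argue globally. The remaining terms are supported on the finitely many places outside $\Sigma$ at which the quadratic character attached to $d/c\in K^*$ is ramified; at such a place $E$ has good reduction and $v\nmid2$, and a direct local computation identifies $t_v$ with the parity of a term of Hilbert-symbol type built from $E$ and $d/c$. Summing over all places and invoking global duality --- the Poitou--Tate exact sequence, which forces the images of $\msel_2(E_c/K)$ and $\msel_2(E_d/K)$ in the local quotients $\bigoplus_v(\mathcal{L}_v^{c}+\mathcal{L}_v^{d})/(\mathcal{L}_v^{c}\cap\mathcal{L}_v^{d})$ to be maximal isotropic, or equivalently Hilbert reciprocity for the symbols that arise --- gives $\sum_v t_v\equiv0\pmod2$. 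Since $t_v=0$ for $v\in\Sigma$, this yields $\sum_{v\notin\Sigma}t_v\equiv0$ and completes the proof.

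The main obstacle is the combination of the last two steps: producing a mod-$2$ parity formula whose summands are genuinely local, pinning those summands down precisely enough at the finitely many ``new'' bad places outside $\Sigma$ to recognize them inside a product formula, and identifying the precise global duality statement that forces the sum to be even. The first paragraph, by contrast, is essentially formal, and the subtlest places --- those above $2$ and the archimedean ones --- lie inside $\Sigma$ and are disposed of at once by the hypothesis; the real work is at the finite primes outside $\Sigma$ at which the twist alters the reduction type, and in the global cancellation of their contributions.
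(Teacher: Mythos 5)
Your opening two paragraphs follow the same strategy as \cite{quadratic_twists}: identify the $2$-torsion modules of all the quadratic twists, write $d_2(E_d)\equiv d_2(E_c)+\sum_v t_v$ via the Klagsbrun--Mazur--Rubin parity formula (Theorem~\ref{t:ss} here), and observe $t_v=0$ for $v\in\Sigma$ (by the hypothesis that $c/d\in(K_v^*)^2$) and for good $v\nmid 2$ with $\ord_v(c/d)$ even (Lemma~\ref{unramified}). The trouble is in your third paragraph. You assert that the vanishing $\sum_v t_v\equiv 0$ follows from the Poitou--Tate sequence forcing the images of the two Selmer groups in $\bigoplus_v(\mathcal{L}^c_v+\mathcal{L}^d_v)/(\mathcal{L}^c_v\cap\mathcal{L}^d_v)$ to be maximal isotropic. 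But that maximal-isotropy statement is precisely what produces the parity formula $d_2(E_d)-d_2(E_c)\equiv\sum_v t_v$ in the first place; it does not force the right-hand side to vanish, and if it did, every quadratic twist of $E$ would have $2$-Selmer rank of the same parity, which is false. You have invoked the same global duality input twice --- once to get the formula, and once to claim its right-hand side is zero --- and only the first use is legitimate. Nor is that isotropy statement ``equivalent'' to Hilbert reciprocity: one is a duality property of the Selmer group, the other is the product formula for the Hilbert symbol.

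The vanishing of $\sum_{v\notin\Sigma}t_v$ really does come from Hilbert reciprocity, but this requires the local computation you only gesture at. At a good place $v\nmid 2$ where $\ord_v(c/d)$ is odd one must first prove $t_v\equiv\dim_{\F_2}E(K_v)[2]\pmod 2$ (the content of Lemma~\ref{totally_ramified} above, or \cite[Lemmas 2.12, 2.15]{yu}), and then observe that $\dim_{\F_2}E(K_v)[2]\equiv\tfrac{1}{2}\bigl(1-\legendre{\Delta_E}{v}\bigr)\pmod 2$, because Frobenius at $v$ permutes the three nonzero $2$-torsion points with sign $\legendre{\Delta_E}{v}$. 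Only then does $\sum_{v\notin\Sigma}t_v$ become $\sum_v\tfrac{1}{2}\bigl(1-(\Delta_E,c/d)_v\bigr)$ over all places (the terms at $v\in\Sigma$ vanish because there $c/d$ is a square, and those at good $v$ with $\ord_v(c/d)$ even vanish because the tame Hilbert symbol of two units is trivial), after which $\prod_v(\Delta_E,c/d)_v=1$ closes the argument. Without the explicit identification $t_v\equiv\tfrac{1}{2}\bigl(1-\legendre{\Delta_E}{v}\bigr)$, the reciprocity step has nothing to act on, so this is a genuine gap rather than an omitted routine verification. (A smaller slip: the condition for $t_v=0$ at a good prime is $\ord_v(c)\equiv\ord_v(d)\pmod 2$, not $\ord_v(c)\equiv\ord_v(d)\equiv 0\pmod 2$.)
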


The conditions of this theorem partition $K^*$ into finitely many classes, so by computing $d_2(E_d)$ for one $d$ in each class (it is well-known that the 2-Selmer rank is computable), we can classify the 2-Selmer rank parity of all quadratic twists of $E$.

Let $\bar{K}$ denote an algebraic closure of $K$.  When $E$ has $j$-invariant not equal to 1728 or 0, the quadratic twists of $E$ are precisely the elliptic curves defined over $K$ which are isomorphic to $E$ as curves over $\bar{K}$.  Hence the above result classifies the conjectural rank parities of all elliptic curves over $K$ which are $\bar{K}$-isomorphic to $E$.

However, when $E$ has $j$-invariant 1728 or 0, it is well-known that the $\bar{K}$-isomorphic curves are the quartic or sextic twists of $E$, respectively, not just the quadratic twists (see \cite[Proposition X.5.4]{silverman}).  In this paper, we find versions of the above result for quartic twists of $y^2 = x^3 + x$ over $K$ (Theorem \ref{main_thm} and Proposition \ref{mod_4_prop}) and sextic twists of $y^2 = x^3 + 1$ over $K$ (Theorem \ref{main_thm2} and Proposition \ref{mod_6_prop}).  By doing so, we complete the following task: Classify the (conjectural) rank parities of all elliptic curves over $K$ which are $\bar{K}$-isomorphic to a given curve, after performing a finite amount of computation.

Section \ref{background} provides background on the variation of Selmer rank parities under twists of abelian varieties, based on \cite{quadratic_twists, kbr, mazur_towers, mazur_rubin, poonen_rains, yu}.  The section culminates in Theorem \ref{main_bg}, which is the main result used in the sequel.  \S \ref{setup} sets up the general situation, \S \ref{metabolic}-\ref{sec_parity_twist_formula} explain existing results on how to write the variation in Selmer rank parity as a sum of local invariants (Theorem \ref{parity_twist_formula}), \S \ref{local_conditions} shows how to evaluate the local invariants in certain cases, and \S \ref{mu_n} specializes to the case that we will use in the sequel.  Sections \ref{quartic} and \ref{sextic} then show how to classify the conjectural rank parities of quartic and sextic twists of elliptic curves, respectively, after a finite calculation.  Finally, in Section \ref{example_q}, we give an explicit example, in which we determine the conjectural rank parities of all quartic twists of $y^2 = x^3 + x$ and sextic twists of $y^2 = x^3 + 1$ over $\Q$.
\smallskip

\section{Selmer Rank Parities for Twists of Abelian Varieties}\label{background}
In this section, we summarize and slightly generalize existing results on Selmer rank parities of abelian varieties, culminating in Theorem \ref{main_bg}.  This is based on material from several references including \cite{kbr, mazur_towers, mazur_rubin, poonen_rains, yu}, and it mostly follows the quadratic twist case (see \cite[\S 3]{quadratic_twists}).

\subsection{Notation}\label{setup}
Let $K$ be a number field.  Fix an algebraic closure $\bar{K}$ of $K$, and let $G_K := \Gal(\bar{K}/K)$ be the absolute Galois group of $K$.  For each place $v$ of $K$, fix an embedding of $\bar{K}$ into the algebraic closure $\bar{K}_v$ of the completion $K_v$ of $K$ at $v$. This gives an embedding of the absolute Galois group $G_{K_v} := \Gal(\bar{K}_v/K_v)$ into $G_K$. If $v$ is a finite place, then $\mc{O}_v$ denotes the ring of integers of $K_v$, $\pi_v$ a fixed uniformizer for $K_v$, $\ord_v(-)$ the $v$-adic valuation, and $k_v := \mc{O}_v / (\pi_v)$ the residue field of $K_v$.  Also, for any $G_{K_v}$-module $M$ such that the inertia subgroup of $G_{K_v}$ acts trivially on $M$, $H^1_\mur(G_{K_v}, M)$ denotes the subgroup of $H^1(G_{K_v}, M)$ consisting of unramified cocyles, i.e., the image of $H^1(G_{k_v}, M)$ under the inflation map.

Let $A$ be a principally polarized abelian variety over $K$ (i.e., $A$ is defined over $K$ and we have fixed a principal polarization of $A$ over $K$).  Let $\lambda: A \ra A$ be a self-dual isogeny of prime degree $p$, with kernel denoted by $A[\lambda]$.  Let $\Sigma$ denote a fixed finite set of places of $K$ containing all places of bad reduction for $A$, all places above $p$, and all archimedean places.

Finally, let $H$ be a $G_K$-stable subgroup of the automorphism group $\mbox{Aut}(A)$ of $A$ such that every $\eta \in H$ acts trivially on $A[\lambda]$ and commutes with $\lambda$ (i.e., $\eta \circ \lambda = \lambda \circ \eta$).  For any field $F$ containing $K$, we let $1_F$ denote the identity cocycle in $H^1(G_F, H)$.

\subsection{Global metabolic structures and Selmer structures}\label{metabolic}

We wish to construct a global metabolic structure on the $G_K$-module $A[\lambda]$, as defined in \cite{kbr}.  To state the definition, first let $V$ be a finite-dimensional vector space over $\F_p$, equipped with a quadratic form $q: V \ra \F_p$.  Let
\[
(v, w)_q := q(v+w) - q(v) - q(w)
\]
be the bilinear form associated to $q$.  A subspace $X \subset V$ is called a Lagrangian subspace if $q|_X = 0$ and $X^\perp = X$ with respect to $(,)_q$.  The quadratic space $(V, q)$ is called a metabolic space if $V$ has a Lagrangian subspace and $(,)_q$ is nondegenerate.

  Next, let $e_p : A[p] \times A[p] \ra \mu_p$ be the nondegenerate alternating Weil pairing of $A$, where $A[p]$ denotes the kernel of the multiplication-by-$p$ map. Since $\lambda$ has degree $p$, $A[\lambda] \subset A[p]$, so we get a map $e_p: A[\lambda] \times A[\lambda] \ra \mu_p$ by restriction.  Then composing this map with the cup product gives the local Tate pairing
\[
\langle \:,\: \rangle_v: H^1(G_{K_v}, A[\lambda]) \times H^1(G_{K_v}, A[\lambda]) \ra H^2(G_{K_v}, \mu_p) \mono \F_p
\]
for each place $v$ of $K$.  Note that the local Tate pairing is antisymmetric, and by local Tate duality, it is nondegenerate.

A global metabolic structure on $A[\lambda]$ consists of a quadratic form $q_v$ on $H^1(G_{K_v}, A[\lambda])$ for every place $v$ of $K$, such that:
\begin{enumerate}
\item{the quadratic space $(H^1(G_{K_v}, A[\lambda]), q_v)$ is a metabolic space for every $v$;}
\item{for every $v \notin \Sigma$, $H^1_\mur(G_{K_v}, A[\lambda])$ is an isotropic subspace with respect to $q_v$;}
\item{for any $c \in H^1(G_K, A[\lambda])$, $\sum_v q_v(\loc_v(c)) = 0$;}
\item{the bilinear form induced by $q_v$ is the local Tate pairing $\langle , \rangle_v$ for every $v$.}
\end{enumerate}

In many cases, we can construct a canonical global metabolic structure on $A[\lambda]$.  Specifically, assume that we are in the situation of \cite[\S 4]{poonen_rains}, i.e.,
\begin{equation}\tag{A1}\label{poonen_rains_assumption}
\left[\begin{tabular}{l}
\parbox{0.7\textwidth}{
\begin{enumerate}
  \item $p$ is odd; or
  \item $p = 2$ and $\lambda$ is of the form $\phi_{\mathcal{L}}$ for some symmetric line sheaf $\mathcal{L}$, where $\phi_{\mathcal{L}}$ is as defined in \cite[p.\ 60 and Corollary 5 on p.\ 131]{mumford}.
\end{enumerate}
}
\end{tabular}\right]
\end{equation}
When $p$ is odd, there is a unique global metabolic structure on $A[\lambda]$, defined by
\[
q_v(x) := \frac{1}{2}\langle x, x \rangle_v;
\]
see \cite[Lemma 3.4]{kbr}.  When instead $p = 2$, there is a canonical quadratic form $q_v: H^1(G_{K_v}, A[\lambda]) \ra \mb{F}_2$ arising from the Heisenberg group for each place $v$, as described in \cite[\S 4]{poonen_rains}.  These quadratic forms again give a global metabolic structure; see \cite[Propositions 4.7, 4.9, and 4.11]{poonen_rains}.

A Selmer structure \mc{S} for $A[\lambda]$ consists of an $\mb{F}_p$-subspace $H^1_\mc{S}(G_{K_v}, A[\lambda]) \subset H^1(G_{K_v}, A[\lambda])$ for every place $v$ of $K$, such that $H^1_\mc{S}(G_{K_v}, A[\lambda]) = H^1_\mur(G_{K_v}, A[\lambda])$ for all but finitely many $v$. We say that \mc{S} is Lagrangian if for every $v$, $H^1_\mc{S}(G_{K_v}, A[\lambda])$ is a Lagrangian subspace of $H^1(G_{K_v}, A[\lambda])$ with respect to the canonical global metabolic structure defined above. The Selmer group associated to \mc{S} is defined by
$$H^1_\mc{S}(G_K, A[\lambda]) := \ker \left( H^1(G_K, A[\lambda]) \ra \bigoplus_v \left(H^1(G_{K_v}, A[\lambda])/H^1_\mc{S}(G_{K_v}, A[\lambda])\right) \right),$$
where the sum runs over all places $v$ of $K$. Note that the Selmer group is finite, since it consists of cocycles which are unramified outside of a fixed finite set of places.

The main result we need concerning global metabolic structures and Selmer structures is the following.

\begin{thm}[{\cite[Theorem 3.9]{kbr}}] \label{t:ss}
Assume $\lambda$ satisfies condition (\ref{poonen_rains_assumption}).  Suppose \mc{S} and $\mc{S}'$ are Lagrangian Selmer structures for $A[\lambda]$. Then:
\begin{equation*}
\dim_{\mb{F}_p}(H^1_\mc{S}(G_K, A[\lambda])) - \dim_{\mb{F}_p}(H^1_{\mc{S}'}(G_K, A[\lambda]))
\end{equation*}
$$\equiv \sum_v \dim_{\mb{F}_p}\left(H^1_\mc{S}(G_{K_v}, A[\lambda])/\left(H^1_\mc{S}(G_{K_v}, A[\lambda]) \cap H^1_{\mc{S}'}(G_{K_v}, A[\lambda])\right)\right) \pmod{2},$$
where the sum is taken over all places $v$ of $K$.
\end{thm}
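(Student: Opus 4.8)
The plan is to reduce the theorem, following the quadratic-twist case of \cite{kbr}, to a statement in the linear algebra of metabolic spaces, via Poitou--Tate global duality. Choose a finite set of places $\Sigma' \supseteq \Sigma$ (which we may enlarge at will) such that $H^1_{\mc{S}}(G_{K_v}, A[\lambda]) = H^1_{\mc{S}'}(G_{K_v}, A[\lambda]) = H^1_\mur(G_{K_v}, A[\lambda])$ for all $v \notin \Sigma'$, and set
$$V := \bigoplus_{v \in \Sigma'} H^1(G_{K_v}, A[\lambda]), \qquad q := \bigoplus_{v \in \Sigma'} q_v,$$
where $q_v$ is the quadratic form of the canonical global metabolic structure (which exists by condition (\ref{poonen_rains_assumption})). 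By local Tate duality the associated bilinear form $\bigoplus_v \langle\,,\,\rangle_v$ is nondegenerate, and a direct sum of local Lagrangian subspaces is Lagrangian in $V$; hence $(V, q)$ is a metabolic space.

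Inside $(V, q)$ I single out three Lagrangian subspaces. Two are the subspaces of local conditions $\mc{T} := \bigoplus_{v\in\Sigma'} H^1_{\mc{S}}(G_{K_v}, A[\lambda])$ and $\mc{T}' := \bigoplus_{v\in\Sigma'} H^1_{\mc{S}'}(G_{K_v}, A[\lambda])$, which are Lagrangian because $\mc{S}, \mc{S}'$ are Lagrangian Selmer structures. The third is the image $\mc{G}$ of the localization map $\loc \colon H^1(G_{K,\Sigma'}, A[\lambda]) \to V$ on the group of classes unramified outside $\Sigma'$. That $\mc{G}$ is Lagrangian is the one genuinely global input: axiom (3) of the global metabolic structure, together with axiom (2) (which kills the contribution of the places outside $\Sigma'$), gives $q(\loc(c)) = 0$ for every $c$, so $\mc{G}$ is isotropic for $q$; and because $A[\lambda]$ is self-dual under $e_p$, the Poitou--Tate exact sequence identifies $\mc{G}$ with its own orthogonal complement under $\bigoplus_v \langle\,,\,\rangle_v$. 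Thus $\mc{G} = \mc{G}^\perp$, and $\mc{G}$ is a Lagrangian subspace.

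Granting this, the theorem becomes pure linear algebra. Every class in $H^1_{\mc{S}}(G_K, A[\lambda])$ is unramified outside $\Sigma'$, so $H^1_{\mc{S}}(G_K, A[\lambda]) = \loc^{-1}(\mc{T})$ and hence $\dim_{\F_p} H^1_{\mc{S}}(G_K, A[\lambda]) = \dim \ker(\loc) + \dim(\mc{G} \cap \mc{T})$, and likewise for $\mc{S}'$. Subtracting, the $\ker(\loc)$ terms cancel, so the left-hand side of the theorem equals $\dim(\mc{G} \cap \mc{T}) - \dim(\mc{G} \cap \mc{T}')$; and since the local conditions agree outside $\Sigma'$, the right-hand side of the theorem equals $\dim \mc{T} - \dim(\mc{T} \cap \mc{T}')$. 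As $\mc{G}$, $\mc{T}$, $\mc{T}'$ are all Lagrangian they have a common dimension $n$, where $2n = \dim_{\F_p} V$, and a short mod-$2$ rearrangement shows the theorem follows from the following fact: \emph{for any three Lagrangian subspaces $G, X, Y$ of a $2n$-dimensional metabolic space over $\F_p$,}
$$\dim(G \cap X) + \dim(G \cap Y) + \dim(X \cap Y) \equiv n \pmod 2.$$

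I would prove this lemma by induction on $n$, the case $n = 0$ being trivial. If some pairwise intersection is nonzero, choose a nonzero vector $z$ in it, say $z \in G \cap X$; then $G, X \subseteq z^\perp$ and $q(z) = 0$, so $q$ descends to a metabolic form on $z^\perp / \langle z \rangle$, a space of dimension $2(n-1)$, and $G$, $X$, $Y$ push down to Lagrangian subspaces of it whose three pairwise-intersection dimensions differ from the original ones in a controlled way (depending only on whether $z \in Y$); one then reads off the claim from the inductive hypothesis. The remaining case is when $G, X, Y$ are pairwise transverse: writing $V = G \oplus X$ and realizing $Y$ as the graph of an isomorphism $G \to X$, the Lagrangian conditions on $Y$ are equivalent to the existence of a nondegenerate alternating form on $G$, which forces $n = \dim G$ to be even, so that the left-hand sum is $0 \equiv n$. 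I expect the two points needing the most care to be the verification that $\mc{G}$ is genuinely Lagrangian — i.e., that axiom (3) upgrades the Poitou--Tate isotropy of $\mc{G}$ to a full Lagrangian condition — and the transverse base case of the lemma; everything else is routine bookkeeping with local and global duality.
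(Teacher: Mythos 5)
The paper gives no proof of this theorem; it is quoted directly from \cite[Theorem 3.9]{kbr}. Your argument is a correct reconstruction of the KMR proof, reproducing its two key ingredients in the same roles: the Poitou--Tate/self-duality step showing the image of the global classes is Lagrangian in the finite direct sum of local cohomology groups, and the three-Lagrangian parity lemma (equivalent to \cite[Proposition 2.4]{kbr}), which you then apply to the global Lagrangian together with the two local-condition Lagrangians.
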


\subsection{Twisting}\label{twisting}
It is well-known 
%(see, e.g., \cite[Proposition 5.4]{silverman})
that the twists of $A$ over $K$ (i.e., abelian varieties defined over $K$ which are isomorphic to $A$ as abelian varieties over $\bar{K}$) are classified by $H^1(G_K, \mbox{Aut}(A))$, so $H^1(G_K, H)$ classifies a certain subset of those twists.  (If the natural map $H^1(G_K, H) \ra H^1(G_K, \mbox{Aut}(A))$ is not injective, this classification will have some redundancy.)  We will denote the twist of $A$ over $K$ corresponding to a cocycle $\chi \in H^1(G_K, H)$ by $A_{\chi}$.

By definition, for each $\chi$, there is an isomorphism $\psi: A_{\chi} \ra A$ defined over $\bar{K}$ such that $\chi(\sigma) = (\sigma . \psi) \circ \psi^{-1}$ for all $\sigma \in G_K$.  From the isogeny $\lambda: A \ra A$, we get an isogeny $\psi^{-1} \circ \lambda \circ \psi: A_{\chi} \ra A_{\chi}$ defined over $\bar{K}$; we will also denote this isogeny by $\lambda$.  This isogeny is in fact defined over $K$: if $\sigma \in G_K$, then
\begin{align*}
\sigma . (\psi^{-1} \circ \lambda \circ \psi)
&= (\sigma . \psi)^{-1} \circ (\sigma . \lambda) \circ (\sigma . \psi) \\
&= (\chi(\sigma) \circ \psi)^{-1} \circ \lambda \circ (\chi(\sigma) \circ \psi) \\
&= \psi^{-1} \circ \left(\chi(\sigma)^{-1} \circ \lambda \circ \chi(\sigma)\right) \circ \psi \\
&= \psi^{-1} \circ \lambda \circ \psi
\end{align*}
by the assumption that every $\eta \in H$ commutes with $\lambda$.  Note that if $\lambda: A \ra A$ satisfies condition (\ref{poonen_rains_assumption}), then so does $\lambda: A_\chi \ra A_\chi$ for each $\chi$, so we also get a canonical global metabolic structure on $A_\chi[\lambda]$.

Also note that $\chi(\sigma) |_{A[\lambda]}$ is the identity by assumption, so $((\sigma . \psi) \circ \psi^{-1})|_{A[\lambda]} = 1$, i.e., $\psi|_{A_\chi[\lambda]} = (\sigma . \psi)|_{A_\chi[\lambda]}$.  Hence $\psi|_{A_\chi[\lambda]}$ is defined over $K$, so that $\psi|_{A_\chi[\lambda]}: A_\chi[\lambda] \ra A[\lambda]$ is an isomorphism of $G_K$-modules.%  In particular, because $A[\lambda]$ is defined over $K$, so is $A_{\chi}[\lambda]$.

\subsection{The parity twist formula}\label{sec_parity_twist_formula}

To each isogeny $\lambda: A_\chi \ra A_\chi$, we can associate a Selmer group $\msel^{\lambda}(A/K)$ in the usual way.  It is well-known that the Selmer group is a finite-dimensional $\mb{F}_p$-vector space, since it consists of cocycles that are unramified outside of a fixed finite set of places.  We call its dimension
\[
d_\lambda(A_\chi) := \dim_{\F_p}(\msel^\lambda(A_\chi))
\]
the $\lambda$-Selmer rank of $A$.  Our goal in this subsection is to write the parity of $d_\lambda(A_\chi) - d_\lambda(A)$ as a sum of local invariants depending on $\chi$.

As mentioned above, we have an isomorphism of $G_K$-modules $\psi|_{A_\chi[\lambda]}: A_\chi[\lambda] \xra{\sim} A[\lambda]$.  This isomorphism induces a natural isomorphism
\[
H^1(G_{K_v}, A_{\chi}[\lambda]) \xra{\sim} H^1(G_{K_v}, A[\lambda]),
\]
for each place $v$ of $K$, which identifies the local Tate pairings (because $A[\lambda] \cong A_\chi[\lambda]$ identifies the Weil pairings).  When $p > 2$, this implies that it also identifies the unique global metabolic structures.  However, when $p = 2$, it appears that there is no \textit{a priori} reason for this isomorphism to identify the canonical global metabolic structures, so we must assume it:
\begin{equation}\tag{A2}\label{yu_assumption}
\left[\begin{tabular}{l}
\parbox{0.7\textwidth}{
If $p = 2$, then the natural isomorphism
\[
H^1(G_{K_v}, A_{\chi}[\lambda]) \xra{\sim} H^1(G_{K_v}, A[\lambda])
\]
identifies the canonical quadratic forms constructed in \cite[\S 4]{poonen_rains}, for all places $v$ of $K$ and cocycles $\chi \in H^1(G_K, H)$.
}
\end{tabular}\right]
\end{equation}

We can now associate a Selmer structure $\mc{S}_\chi$ to any cocycle $\chi \in H^1(G_K, H)$: for each place $v$, let $H^1_{\mc{S}_\chi}(G_{K_v}, A[\lambda]) \subset H^1(G_{K_v}, A[\lambda])$ be the image under the Kummer map of $A_\chi(K_v) / \lambda A_\chi(K_v)$, where we have used the natural isomorphism to identify $H^1(G_{K_v}, A_\chi[\lambda])$ with $H^1(G_{K_v}, A[\lambda])$. It is well-known that this construction does indeed define a Selmer structure.  This Selmer structure is Lagrangian when $\chi = 1_K$ by \cite[Propositions 4.9 and 4.11]{poonen_rains}, and it is Lagrangian for general $\chi$ because the natural isomorphism identifies the local Tate pairings and canonical global metabolic structures.

For any place $v$ of $K$ and any cocycle $\chi \in H^1(G_K, H)$, define the local invariant $\delta_v(A, \chi)$ by
$$\delta_v(A, \chi) := \dim_{\mb{F}_p} \left( H^1_{\mc{S}_{1_K}}(G_{K_v}, A[\lambda])/(H^1_{\mc{S}_{1_K}}(G_{K_v}, A[\lambda]) \cap H^1_{\mc{S}_\chi}(G_{K_v}, A[\lambda])) \right).$$
Then applying Theorem \ref{t:ss} to our situation gives the following theorem, which I call the ``parity twist formula'':

\begin{thm}\label{parity_twist_formula}
Assume $\lambda$ satisfies conditions (\ref{poonen_rains_assumption}) and (\ref{yu_assumption}).  Then for any cocycle $\chi \in H^1(G_K, H)$, we have
$$d_\lambda(A) - d_\lambda(A_\chi) \equiv \sum_v \delta_v(A, \chi) \pmod{2},$$
where the sum is taken over all places $v$ of $K$.
\end{thm}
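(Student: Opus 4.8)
The plan is to obtain the formula as an essentially immediate consequence of Theorem \ref{t:ss}, applied to the pair of Lagrangian Selmer structures $\mc{S}_{1_K}$ and $\mc{S}_\chi$ for $A[\lambda]$. Nearly all of the genuine content has already been assembled in \S\ref{twisting} and \S\ref{sec_parity_twist_formula}, so the work that remains is to check that the hypotheses of Theorem \ref{t:ss} are satisfied and that its two sides translate into the two sides of the parity twist formula.

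First I would pin down the identification of Selmer groups. By definition $\msel^\lambda(A_\chi/K)$ is the set of classes in $H^1(G_K, A_\chi[\lambda])$ whose localization at every place $v$ lies in the image of the Kummer map $A_\chi(K_v)/\lambda A_\chi(K_v) \mono H^1(G_{K_v}, A_\chi[\lambda])$. The $G_K$-module isomorphism $\psi|_{A_\chi[\lambda]}\colon A_\chi[\lambda]\xra{\sim}A[\lambda]$ of \S\ref{twisting} induces compatible isomorphisms on $H^1(G_K,-)$ and on each $H^1(G_{K_v},-)$, and by construction $H^1_{\mc{S}_\chi}(G_{K_v},A[\lambda])$ is precisely the image of the local Kummer condition under the latter. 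Since the global isomorphism intertwines the localization maps $\loc_v$, it carries $\msel^\lambda(A_\chi/K)$ isomorphically onto $H^1_{\mc{S}_\chi}(G_K,A[\lambda])$; hence $d_\lambda(A_\chi)=\dim_{\F_p}H^1_{\mc{S}_\chi}(G_K,A[\lambda])$, and the same argument with $\chi=1_K$ gives $d_\lambda(A)=\dim_{\F_p}H^1_{\mc{S}_{1_K}}(G_K,A[\lambda])$.

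Next I would verify the hypotheses of Theorem \ref{t:ss}. Condition (\ref{poonen_rains_assumption}) on $\lambda$ is assumed. Each $\mc{S}_\chi$ is a Selmer structure: its local condition at $v$ is an $\F_p$-subspace, being the image of a homomorphism out of an $\F_p$-vector space, and it agrees with $H^1_\mur(G_{K_v},A[\lambda])$ for all but finitely many $v$, since at a place $v\notin\Sigma$ both $A$ and $A_\chi$ have good reduction and the local isomorphism identifies $A[\lambda]$ with $A_\chi[\lambda]$ as unramified $G_{K_v}$-modules, under which the image of the Kummer map is the unramified subgroup. That each $\mc{S}_\chi$ is Lagrangian has been recalled above: for $\chi=1_K$ it is \cite[Propositions 4.9 and 4.11]{poonen_rains}, and for general $\chi$ it follows because the local isomorphisms identify the local Tate pairings and, invoking (\ref{yu_assumption}) when $p=2$, the canonical global metabolic structures, so a Lagrangian local condition for $\mc{S}_{1_K}$ transports to one for $\mc{S}_\chi$.

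Finally, applying Theorem \ref{t:ss} with $\mc{S}=\mc{S}_{1_K}$ and $\mc{S}'=\mc{S}_\chi$ and using the identifications above yields
\[
d_\lambda(A)-d_\lambda(A_\chi)\equiv\sum_v\dim_{\F_p}\left(H^1_{\mc{S}_{1_K}}(G_{K_v},A[\lambda])\big/\bigl(H^1_{\mc{S}_{1_K}}(G_{K_v},A[\lambda])\cap H^1_{\mc{S}_\chi}(G_{K_v},A[\lambda])\bigr)\right)\pmod{2},
\]
and each summand on the right is by definition $\delta_v(A,\chi)$, which is the claim. I expect the only delicate point to be the compatibility bookkeeping in the first two steps — confirming that the various natural isomorphisms respect localization, the Kummer maps, the Weil pairings, and (when $p=2$) the Heisenberg quadratic forms — but all of this is either routine or has already been isolated into Theorem \ref{t:ss} and assumptions (\ref{poonen_rains_assumption}) and (\ref{yu_assumption}), so no genuinely new difficulty arises in deducing Theorem \ref{parity_twist_formula}.
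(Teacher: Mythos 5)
Your proposal is correct and takes precisely the route the paper intends: the paper's own justification is implicit in the paragraphs of \S\ref{sec_parity_twist_formula} preceding the statement, which set up $\mc{S}_{1_K}$ and $\mc{S}_\chi$ as Lagrangian Selmer structures (with $H^1_{\mc{S}_\chi}(G_K,A[\lambda])\cong\msel^\lambda(A_\chi/K)$), after which Theorem~\ref{t:ss} is invoked directly. Your write-up makes explicit the same three steps --- identifying the Selmer groups, checking the Selmer-structure and Lagrangian hypotheses via \cite[Propositions 4.9 and 4.11]{poonen_rains} and (\ref{yu_assumption}), and applying Theorem~\ref{t:ss} --- so there is no substantive difference from the paper's argument.
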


\subsection{Local conditions}\label{local_conditions}
Fix a cocycle $\chi \in H^1(G_K, H)$ and a place $v$ of $K$.  We wish to determine $\delta_v(A, \chi)$ in certain cases.  The results in this subsection are adapted from \cite[Lemmas 2.9-11 and 2.15-16]{yu}.

Let $F_w$ be a finite extension of $K_v$ such that the restriction of $\chi$ to $F_w$ is trivial, hence $A \cong A_\chi$ over $F_w$.

\begin{lemma}
\label{unramified}
Assume $A$ has good reduction at $v$ and $F_w/K_v$ is unramified.  Then $\delta_v(A, \chi) = 0$.
\end{lemma}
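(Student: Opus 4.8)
The goal is to show that the local invariant $\delta_v(A, \chi)$ vanishes when $A$ has good reduction at $v$ and $F_w / K_v$ is unramified. Recall that $\delta_v(A, \chi)$ measures the failure of the local condition $H^1_{\mc{S}_{1_K}}(G_{K_v}, A[\lambda])$ — the image of the Kummer map for $A$ itself — to be contained in $H^1_{\mc{S}_\chi}(G_{K_v}, A[\lambda])$, the image of the Kummer map for the twist $A_\chi$ (under the natural $G_{K_v}$-module identification $A_\chi[\lambda] \cong A[\lambda]$). The plan is to show that under the stated hypotheses, both of these subspaces coincide with the unramified subgroup $H^1_\mur(G_{K_v}, A[\lambda])$, whence $\delta_v(A, \chi) = 0$.

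The key observation is that since $A$ has good reduction at $v$ and $v \notin \{$places above $p\}$ is implicit (actually we do not even need $v \nmid p$ here — good reduction suffices for the standard fact), the image of the Kummer map $A(K_v)/\lambda A(K_v) \hookrightarrow H^1(G_{K_v}, A[\lambda])$ equals $H^1_\mur(G_{K_v}, A[\lambda])$; this is the classical computation that for an abelian variety with good reduction, the Kummer image is exactly the unramified classes (see e.g. the analogue of \cite[Lemma I.3.8]{mazur_rubin} or the discussion in \cite{kbr}), using that the inertia invariants of $A(\bar{K}_v)$ surject onto $A(\bar{K}_v)$ via $\lambda$ on the level of the formal/reduction filtration. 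Applying this to $A$ gives $H^1_{\mc{S}_{1_K}}(G_{K_v}, A[\lambda]) = H^1_\mur(G_{K_v}, A[\lambda])$. Now I want to say the same for $A_\chi$. Over the unramified extension $F_w$, the cocycle $\chi$ becomes trivial, so $A_{\chi}$ is isomorphic to $A$ over $F_w$; since $A$ has good reduction over $K_v$ and $F_w/K_v$ is unramified, $A_\chi$ acquires good reduction over the unramified extension $F_w$, hence $A_\chi$ already has good reduction over $K_v$ (good reduction descends along unramified extensions — this is the criterion of Néron–Ogg–Shafarevich, since $A_\chi[p]$ is unramified over $F_w$ hence over $K_v$, as $G_{F_w}$ contains the inertia subgroup $I_{K_v}$). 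Therefore the same classical fact applied to $A_\chi$ gives $H^1_{\mc{S}_\chi}(G_{K_v}, A[\lambda]) = H^1_\mur(G_{K_v}, A_\chi[\lambda])$, and under the natural $G_{K_v}$-isomorphism $A_\chi[\lambda] \cong A[\lambda]$ (which restricts to an isomorphism on inertia invariants and hence identifies unramified cohomology) this equals $H^1_\mur(G_{K_v}, A[\lambda])$.

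Putting these together, $H^1_{\mc{S}_{1_K}}(G_{K_v}, A[\lambda]) = H^1_\mur(G_{K_v}, A[\lambda]) = H^1_{\mc{S}_\chi}(G_{K_v}, A[\lambda])$, so the two local conditions are equal; in particular $H^1_{\mc{S}_{1_K}}(G_{K_v}, A[\lambda]) \cap H^1_{\mc{S}_\chi}(G_{K_v}, A[\lambda]) = H^1_{\mc{S}_{1_K}}(G_{K_v}, A[\lambda])$, and the quotient in the definition of $\delta_v(A, \chi)$ is trivial. Hence $\delta_v(A, \chi) = 0$.

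The main obstacle — really the only nontrivial input — is verifying that $A_\chi$ has good reduction at $v$, i.e., that good reduction descends along the unramified extension $F_w/K_v$. This follows from the Néron–Ogg–Shafarevich criterion: $A_\chi[\ell]$ for any prime $\ell \neq v$-residue-characteristic is unramified as a $G_{F_w}$-module (since $A_\chi \cong A$ over $F_w$ and $A$ has good reduction), and because $F_w/K_v$ is unramified, $I_{K_v} \subset G_{F_w}$, so $A_\chi[\ell]$ is already unramified as a $G_{K_v}$-module; good reduction of $A_\chi$ over $K_v$ follows. Once this is in hand, the rest is a matter of invoking the standard identification of the Kummer image with the unramified subgroup, together with the compatibility of that identification with the $G_{K_v}$-module isomorphism $A_\chi[\lambda] \cong A[\lambda]$, which was already noted in \S\ref{twisting} to be defined over $K$.
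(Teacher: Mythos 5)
Your proof is essentially correct (for $v \nmid p$), but it takes a genuinely different route from the paper. The paper's proof works entirely via the corestriction map: it uses the adjointness of restriction and corestriction with respect to the local Tate pairing together with the fact that $H^1_{\mc{S}_{1_F}}(G_{F_w}, A[\lambda])$ is its own orthogonal complement (\cite[Proposition 4.11]{poonen_rains}) to show that the norm image $\mcor(H^1_{\mc{S}_\chi}(G_{F_w}, A[\lambda]))$ lands inside $H^1_{\mc{S}_{1_K}}(G_{K_v}, A[\lambda]) \cap H^1_{\mc{S}_\chi}(G_{K_v}, A[\lambda])$, and then invokes Mazur's theorem \cite[Corollary 4.4]{mazur_towers} that $N: A(F_w) \to A(K_v)$ is surjective for good reduction and $F_w/K_v$ unramified, to conclude that the norm image is all of $H^1_{\mc{S}_{1_K}}(G_{K_v}, A[\lambda])$. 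Your proof instead identifies both local conditions with $H^1_\mur(G_{K_v}, A[\lambda])$ directly: you first descend good reduction to $A_\chi$ via N\'eron--Ogg--Shafarevich (using $I_{K_v} = I_{F_w}$ and $A_\chi \cong A$ over $F_w$), and then apply the classical ``Kummer image equals unramified cohomology'' fact to both $A$ and $A_\chi$.

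However, your parenthetical claim that ``we do not even need $v \nmid p$ here --- good reduction suffices for the standard fact'' is false, and this is the one real flaw. The identification of the Kummer image with $H^1_\mur$ requires $\lambda$-divisibility of $A(K_v^{\mur})$, which holds only when $\deg\lambda$ is prime to the residue characteristic; at $v \mid p$ the formal group contributes to $A(K_v)/\lambda A(K_v)$ in a way that is not captured by unramified cohomology (indeed, for $v \mid p$ the module $A[\lambda]$ need not even be unramified, so $H^1_\mur(G_{K_v}, A[\lambda])$ may not be defined). The paper's corestriction argument, by contrast, does not mention $H^1_\mur$ at all and works for $v \mid p$ as well, which is why the lemma is stated without a $v \nmid p$ hypothesis. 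Since every application in the paper is for $v \nmid p$ (as $\Sigma$ contains all places above $p$), your proof would suffice for the paper's purposes, but it proves a strictly weaker statement than the lemma as written; you should either add $v \nmid p$ to the hypotheses or switch to the norm-map argument.
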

\begin{proof}
Let $N: A(F_w) \ra A(K_v)$ denote the norm map.  By the same argument as in \cite[Lemma 2.9]{yu}, $H^1_{\mc{S}_{1_K}}(G_{K_v}, A[\lambda]) \cap H^1_{\mc{S}_\chi}(G_{K_v}, A[\lambda])$ contains the image of $\left(N(A(F_w)) + \lambda(A(K_v))\right)/\lambda(A(K_v))$ under the Kummer map, as follows.  Consider the commutative diagram
\begin{align}\label{res_cor}
\begindc{\commdiag}[100]
\obj(0, 4)[topL]{$H^1(G_{F_w}, A[\lambda])$}
\obj(0, 0)[bottomL]{$H^1(G_{K_v}, A[\lambda])$}
\obj(10, 4)[topC]{$H^1(G_{F_w}, A[\lambda])$}
\obj(10, 0)[bottomC]{$H^1(G_{K_v}, A[\lambda])$}
\obj(20, 4)[topR]{$H^2(G_{F_w}, \mu_p) \mono \F_p$}
\obj(20, 0)[bottomR]{$H^2(G_{K_v}, \mu_p) \mono \F_p$.}
\mor{topL}{bottomL}{$\mcor$}
\mor{bottomC}{topC}{$\mres$}
\mor{topR}{bottomR}{$\mcor = \text{id}$}
\mor{topC}{topR}{$\cup$}
\mor{bottomC}{bottomR}{$\cup$}
\obj(5, 4)[topX]{$\times$}
\obj(5, 0)[bottomX]{$\times$}
\enddc
\end{align}
By \cite[Proposition 4.11]{poonen_rains}, $H^1_{\mc{S}_{1_F}}(G_{F_w}, A[\lambda])$ is its own orthogonal complement in $H^1(G_{F_w}, A[\lambda])$, so
\begin{align*}
H^1_{\mc{S}_{1_F}}(G_{F_w}, A[\lambda]) &= H^1_{\mc{S}_{1_F}}(G_{F_w}, A[\lambda])^\perp \\ &\subset \mres(H^1_{\mc{S}_{1_K}}(G_{K_v}, A[\lambda]))^\perp \\ &= \mcor^{-1}(H^1_{\mc{S}_{1_K}}(G_{K_v}, A[\lambda])).
\end{align*}
Similarly,
\[
H^1_{\mc{S}_{\chi}}(G_{F_w}, A[\lambda]) \subset \mcor^{-1}(H^1_{\mc{S}_{\chi}}(G_{K_v}, A[\lambda])).
\]
But $\mcor^{-1}(H^1_{\mc{S}_{1_K}}(G_{F_w}, A[\lambda])) = \mcor^{-1}(H^1_{\mc{S}_{\chi}}(G_{F_w}, A[\lambda]))$ since $A \cong A_\chi$ over $F_w$.  Then letting $i$ denote the Kummer map,
\begin{align*}
i((N(A(F_w)) + \lambda(A(K_v)))/\lambda(A(K_v))) &= \mcor(H^1_{\mc{S}_{\chi}}(G_{F_w}, A[\lambda])) \\ &\subset H^1_{\mc{S}_{1_K}}(G_{K_v}, A[\lambda]) \cap H^1_{\mc{S}_\chi}(G_{K_v}, A[\lambda])
\end{align*}
since the restriction of $\mcor: H^1(G_{F_w}, A[\lambda]) \ra H^1(G_{K_v}, A[\lambda])$ to $i(A(F_w)/\lambda(A(F_w)))$ induces the norm map.

Now by assumption, $F_w/K_v$ is unramified, so by \cite[Corollary 4.4]{mazur_towers}, $N(A(F_w)) = A(K_v)$.  Hence
\[
H^1_{\mc{S}_{1_K}}(G_{K_v}, A[\lambda]) = i(A(K_v)/\lambda(A(K_v))) \subset H^1_{\mc{S}_{1_K}}(G_{K_v}, A[\lambda]) \cap H^1_{\mc{S}_\chi}(G_{K_v}, A[\lambda]),
\]
so that $\delta_v(A, \chi) = 0$ by definition.
\end{proof}

\begin{lemma}
\label{pro_p_part}
Assume $v \nmid p\infty$.  Then
\[
A_\chi(K_v)/\lambda(A_\chi(K_v)) \cong A_\chi(K_v)[p^\infty]/\lambda(A_\chi(K_v)[p^\infty]).
\]
\end{lemma}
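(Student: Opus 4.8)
The plan is to exploit the well-known structure theory of the $p$-adic points of an abelian variety over a local field, together with the fact that $\lambda$ is an isogeny of degree $p$. First, recall that since $v \nmid p\infty$, the group $A_\chi(K_v)$ contains a finite-index open subgroup that is a pro-$\ell$ group for the rational prime $\ell$ below $v$, with $\ell \neq p$; more precisely, by the theory of formal groups (see, e.g., the analogue of \cite[Proposition VII.6.3]{silverman} for abelian varieties), $A_\chi(K_v)$ has a filtration whose top quotient is finite and whose kernel is a finitely generated pro-$\ell$ group. Consequently $A_\chi(K_v)$ decomposes (non-canonically) as a direct sum of its maximal divisible subgroup, a finite group of order prime to $p$, and the $p$-primary torsion subgroup $A_\chi(K_v)[p^\infty]$; what matters is simply that the pro-$\ell$ part and the prime-to-$p$ torsion are uniquely $p$-divisible.

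The key step is then to observe that multiplication by $p$ — and hence $\lambda$, up to the prime-to-$p$ endomorphism $\hat\lambda$ with $\hat\lambda\circ\lambda = [p]$ — is an automorphism on the prime-to-$p$ part of $A_\chi(K_v)$. Indeed, on a uniquely $p$-divisible group, $[p]$ is bijective, and since $\hat\lambda$ is an isogeny of degree prime to $p$ (its degree is $p^{\dim A}/p \cdot (\text{something})$; more simply, $\lambda\circ\hat\lambda = [p]$ forces $\deg\lambda\cdot\deg\hat\lambda = p^{2\dim A}$, and $\deg\lambda = p$), the map $\hat\lambda$ is also an automorphism on any uniquely $p$-divisible group. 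Writing $A_\chi(K_v) = A_\chi(K_v)[p^\infty] \oplus M$ with $M$ uniquely $p$-divisible, we get $\lambda(A_\chi(K_v)) = \lambda(A_\chi(K_v)[p^\infty]) \oplus M$, since $\lambda(M) = M$ (as $\lambda$ restricted to the uniquely $p$-divisible $M$ is surjective — one can check this by composing with $\hat\lambda$ and using that $[p]M = M$). Taking the quotient gives
$$A_\chi(K_v)/\lambda(A_\chi(K_v)) \cong A_\chi(K_v)[p^\infty]/\lambda(A_\chi(K_v)[p^\infty]),$$
as desired.

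The main obstacle I anticipate is making the decomposition $A_\chi(K_v) = A_\chi(K_v)[p^\infty] \oplus M$ with $M$ uniquely $p$-divisible fully rigorous: one needs that the $p$-part of $A_\chi(K_v)$ is finite (so that $A_\chi(K_v)[p^\infty]$ is a direct summand) and that the complement is uniquely $p$-divisible. The finiteness of the $p$-torsion is immediate since $A_\chi(K_v)[p^\infty] \subset A_\chi(\bar K_v)[p^\infty] \cong (\Q_p/\Z_p)^{2\dim A}$ has a closed point-set that is compact and discrete, hence finite; the unique $p$-divisibility of a suitable complement follows from the pro-$\ell$ structure of the identity component together with the fact that $p$ is invertible in $\Z_\ell$. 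An alternative, perhaps cleaner, route avoids the splitting entirely: apply the snake lemma to multiplication by $\lambda$ acting on the exact sequence $0 \to A_\chi(K_v)_{\mathrm{div}} \to A_\chi(K_v) \to Q \to 0$ where $Q$ is finite, note $\lambda$ is bijective on the divisible part, and reduce to the finite group $Q$, whose prime-to-$p$ part contributes trivially to both the kernel and cokernel of $\lambda$. Either way, once the prime-to-$p$ part is shown to be $\lambda$-acyclic, the isomorphism drops out formally.
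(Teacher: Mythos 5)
Your proof takes essentially the same approach as the paper's, which dispatches the lemma in one sentence: ``Easily $\lambda$ is surjective on the pro-(prime to $p$) part of $A_\chi(K_v)$, so only the pro-$p$ part $A_\chi(K_v)[p^\infty]$ contributes to $A_\chi(K_v)/\lambda(A_\chi(K_v))$.'' You correctly fill in the details, using the local structure theorem and the dual isogeny $\hat\lambda$ with $\lambda\circ\hat\lambda = [p]$. One error in a parenthetical aside: you assert that $\hat\lambda$ has degree prime to $p$, but your own formula gives $\deg\hat\lambda = p^{2\dim A - 1}$, which is a (positive) power of $p$ whenever $\dim A \ge 1$, so this claim is false. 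Fortunately it is also unnecessary: the argument you actually invoke --- that $\lambda(\hat\lambda(M)) = [p]M = M$ --- requires only that $\hat\lambda(M)\subseteq M$ (which holds because any homomorphism from the uniquely $p$-divisible $M$ to the finite $p$-group $A_\chi(K_v)[p^\infty]$ is zero) and that $[p]M = M$, both true without any statement about $\deg\hat\lambda$ being coprime to $p$. Strike the false degree claim and the proof is fine.
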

\begin{proof}
Easily $\lambda$ is surjective on the pro-(prime to $p$) part of $A_\chi(K_v)$, so only the pro-$p$ part $A_\chi(K_v)[p^\infty]$ contributes to $A_\chi(K_v)/\lambda(A_\chi(K_v))$.
\end{proof}

\begin{lemma}
\label{size_and_ramified}
Assume $v \nmid p\infty$.  Then:
\begin{enumerate}
  \item $\dim_{\F_p} \left( H^1_{\mc{S}_\chi}(G_{K_v}, A[\lambda]) \right) = \dim_{\F_p} \left(A_\chi(K_v)/\lambda(A_\chi(K_v))\right) = \dim_{\F_p} (A_\chi(K_v)[\lambda])$.
  \item If $A$ has good reduction at $v$ and $F_w/K_v$ is totally ramified, then $A_\chi(K_v)/\lambda(A_\chi(K_v)) \cong A_\chi(F_w)/\lambda(A_\chi(F_w))$.
\end{enumerate}
\end{lemma}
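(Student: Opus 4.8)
The plan is to prove part (1) first and then bootstrap part (2) from it.

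For (1), the Kummer sequence attached to $\lambda\colon A_\chi\to A_\chi$ over $K_v$ shows that the Kummer map is injective, so it identifies $A_\chi(K_v)/\lambda A_\chi(K_v)$ with the $\F_p$-subspace $H^1_{\mc{S}_\chi}(G_{K_v},A[\lambda])\subseteq H^1(G_{K_v},A[\lambda])$; this gives the first equality (and, incidentally, shows $A_\chi(K_v)/\lambda A_\chi(K_v)$ is an $\F_p$-vector space). For the second equality I would invoke Lemma \ref{pro_p_part} to replace $A_\chi(K_v)/\lambda A_\chi(K_v)$ by $M/\lambda M$, where $M:=A_\chi(K_v)[p^\infty]$ is \emph{finite} because $v\nmid p\infty$ (the reduction map has pro-(residue characteristic) kernel, and the special fibre of the N\'eron model of $A_\chi$ has finitely many $k_v$-points). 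Since $\lambda|_M$ is an endomorphism of a finite abelian group, $|M/\lambda M|=|M[\lambda]|$; and $M[\lambda]=A_\chi(K_v)[\lambda]$ because $A_\chi[\lambda]\subseteq A_\chi[p]$. As both $A_\chi(K_v)[\lambda]$ and $M/\lambda M$ are $\F_p$-vector spaces, equal cardinality forces equal dimension.

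For (2), the point to keep in mind is that $A_\chi$ need not have good reduction at $v$, so one should \emph{not} argue directly with the reduction of $A_\chi$; instead everything is routed through the Galois module $A[\lambda]$, which is unramified and twist-independent, with part (1) serving as the bridge from $\lambda$-cokernels to $\lambda$-kernels. Concretely, the proof of (1) applies verbatim with $K_v$ replaced by $F_w$ (note $w\nmid p\infty$ since $w\mid v$), giving $\dim_{\F_p}A_\chi(K_v)/\lambda A_\chi(K_v)=\dim_{\F_p}A_\chi(K_v)[\lambda]$ and $\dim_{\F_p}A_\chi(F_w)/\lambda A_\chi(F_w)=\dim_{\F_p}A_\chi(F_w)[\lambda]$, so it suffices to check $A_\chi(K_v)[\lambda]=A_\chi(F_w)[\lambda]$. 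By \S\ref{twisting}, $A_\chi[\lambda]\cong A[\lambda]$ as $G_{K_v}$-modules; and $A[\lambda]\subseteq A[p]$ is unramified at $v$ by the criterion of N\'eron--Ogg--Shafarevich (good reduction, $v\nmid p$), so the $G_{K_v}$-action on $A_\chi[\lambda]$ factors through $G_{k_v}$. Since $F_w/K_v$ is totally ramified, $k_w=k_v$ and the composite $G_{F_w}\hookrightarrow G_{K_v}\twoheadrightarrow G_{k_v}$ is surjective; hence $A_\chi[\lambda]^{G_{F_w}}=A_\chi[\lambda]^{G_{k_v}}=A_\chi[\lambda]^{G_{K_v}}$, i.e.\ $A_\chi(F_w)[\lambda]=A_\chi(K_v)[\lambda]$. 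Combining the three displays gives equality of dimensions, hence the asserted isomorphism of $\F_p$-vector spaces.

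The main obstacle is precisely the one flagged at the start of (2): recognizing that one must avoid good-reduction statements for $A_\chi$ itself (whose reduction is genuinely uncontrolled) and instead trade the $\lambda$-cokernels — which are what enter $\delta_v(A,\chi)$ — for the $\lambda$-kernels, which depend only on the unramified, twist-independent $G_{K_v}$-module $A[\lambda]$. The remaining ingredients (finiteness of $A_\chi(K_v)[p^\infty]$, the identity $|M[\lambda]|=|M/\lambda M|$ for an endomorphism of a finite group, and the legitimacy of applying (1) over the finite extension $F_w$) are routine.
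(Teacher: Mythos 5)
Your part (1) is essentially the paper's: the Kummer map is injective (first equality), and Lemma \ref{pro_p_part} plus finiteness of $A_\chi(K_v)[p^\infty]$ give the second equality via $|M/\lambda M|=|M[\lambda]|$ for a finite group $M$ (the paper phrases this with the four-term exact sequence $0\ra M[\lambda]\ra M\xra{\lambda} M\ra M/\lambda M\ra 0$).

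For part (2) you take a genuinely different and, I think, cleaner route. The paper shows $A(F_w)[p^\infty]=A(K_v)[p^\infty]$ (N\'eron--Ogg--Shafarevich for $A$ plus total ramification) and then passes from this to the displayed equality of $\lambda$-quotients for $A_\chi$; that step is literally fine when $\chi=1_K$ — which is the only case invoked downstream, in the proof of Lemma \ref{totally_ramified} — but for general $\chi$ it glosses over the fact that $H$ need not fix $A[p^\infty]$, so $A_\chi[p^\infty]$ is not twist-independent as a $G_{K_v}$-module and $A_\chi(F_w)[p^\infty]=A_\chi(K_v)[p^\infty]$ can genuinely fail. You bypass this by applying part (1) over both $K_v$ and $F_w$ to convert cokernels into $\lambda$-kernels and then working only with the unramified, twist-independent module $A_\chi[\lambda]\cong A[\lambda]$; that is exactly the right object, and your argument is correct for all $\chi$. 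The one thing to be aware of: your dimension count only produces an abstract isomorphism, whereas the paper's version (for $\chi=1_K$) gives that the natural map on $\lambda$-cokernels induced by $A(K_v)\hookrightarrow A(F_w)$ is an isomorphism, and it is this natural map that is used in Lemma \ref{totally_ramified} (both to write $a=\lambda(b)+c$ and to deduce injectivity of restriction on the Selmer subspace). Your unramifiedness observation upgrades easily to $A(F_w)[p^\infty]=A(K_v)[p^\infty]$, which, combined with Lemma \ref{pro_p_part}, yields the natural isomorphism when $\chi=1_K$; so this is a small addendum rather than a real gap, but worth noting if you intend your proof to be a drop-in replacement.
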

\begin{proof}
Since $A(K_v)[\lambda] \subset A(K_v)[p^\infty]$, we have an exact sequence
\[
0 \ra A(K_v)[\lambda] \ra A(K_v)[p^\infty] \xra{\lambda} A(K_v)[p^\infty] \ra A(K_v)[p^\infty]/\lambda(A(K_v)[p^\infty]) \ra 0.
\]
Because $A(K_v)[p^\infty]$ is finite, (1) then follows from Lemma \ref{pro_p_part}.

To prove (2), note that by our assumptions, $K_v(A[p^\infty])/K_v$ is an unramified extension.  Since $F_w/K_v$ is a totally ramified extension, $K_v(A(F_w)[p^\infty]) = K_v$.  Then $A(F_w)[p^\infty] = A(K_v)[p^\infty]$, so
\[
A_\chi(F_w)[p^\infty]/\lambda(A_\chi(F_w)[p^\infty]) = A_\chi(K_v)[p^\infty]/\lambda(A_\chi(K_v)[p^\infty])
\]
and the conclusion follows by Lemma \ref{pro_p_part}.
\end{proof}

\begin{lemma}
\label{totally_ramified}
Assume $H \cong \mu_{p^n}$ as a $G_K$-module for some $n \ge 1$ and $\ker(1 - \eta) = \ker(\lambda)$ for some generator $\eta$ of $H$.  Also assume $v \nmid p\infty$, $A$ has good reduction at $v$, $F_w = \bar{K}_v^{\ker(\chi)}$, and $F_w/K_v$ is a totally ramified degree $p^n$ extension.  Then
\[
H^1_{\mc{S}_{1_K}}(G_{K_v}, A[\lambda]) \cap H^1_{\mc{S}_\chi}(G_{K_v}, A[\lambda]) = 0,
\]
hence $\delta_v(A, \chi) \equiv \dim_{\mb{F}_p}(A(K_v)[\lambda]) \pmod{2}$.
\end{lemma}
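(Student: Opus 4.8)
The displayed congruence is immediate once we know $H^1_{\mc{S}_{1_K}}(G_{K_v},A[\lambda])\cap H^1_{\mc{S}_\chi}(G_{K_v},A[\lambda])=0$: by Lemma \ref{size_and_ramified}(1) we have $\dim_{\mb{F}_p}H^1_{\mc{S}_{1_K}}(G_{K_v},A[\lambda])=\dim_{\mb{F}_p}A(K_v)[\lambda]$, so the definition of $\delta_v$ then gives $\delta_v(A,\chi)=\dim_{\mb{F}_p}A(K_v)[\lambda]$ on the nose. So the plan is to prove the vanishing of this intersection. Since $|A[\lambda]|=p$, the group $A(K_v)[\lambda]$ is either $0$ or all of $A[\lambda]$; in the first case $H^1_{\mc{S}_{1_K}}(G_{K_v},A[\lambda])=0$ by Lemma \ref{size_and_ramified}(1) and there is nothing to prove, so I will assume $A(K_v)[\lambda]=A[\lambda]$, i.e.\ $G_{K_v}$ acts trivially on $A[\lambda]$ and hence (via \S\ref{twisting}) on $A_\chi[\lambda]$.

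Next I would pin down the two Selmer subspaces. Because $A$ has good reduction at $v$ and $v\nmid p\infty$, the isogeny $\lambda$ is surjective on $A(K_v^\mur)$ (on the formal group it is an automorphism, its degree $p$ being prime to the residue characteristic, and on the geometric points of the reduction it is surjective since $\bar{K}_v$ is algebraically closed). Hence every class in $H^1_{\mc{S}_{1_K}}(G_{K_v},A[\lambda])$, being the Kummer image of a point of $A(K_v)\subseteq\lambda A(K_v^\mur)$, restricts to $0$ on $\Gal(\bar{K}_v/K_v^\mur)$, so lies in $H^1_\mur(G_{K_v},A[\lambda])$; comparing dimensions (both equal $\dim_{\mb{F}_p}A(K_v)[\lambda]=1$, using Lemma \ref{size_and_ramified}(1) and that $A[\lambda]$ is unramified) gives $H^1_{\mc{S}_{1_K}}(G_{K_v},A[\lambda])=H^1_\mur(G_{K_v},A[\lambda])$. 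Likewise $\dim_{\mb{F}_p}H^1_{\mc{S}_\chi}(G_{K_v},A[\lambda])=\dim_{\mb{F}_p}A_\chi(K_v)[\lambda]=1$ by Lemma \ref{size_and_ramified}(1) and $A_\chi[\lambda]\cong A[\lambda]$. Thus both Selmer subspaces are lines; since every class in $H^1_\mur(G_{K_v},A[\lambda])$ is unramified, it is enough to exhibit one ramified class in $H^1_{\mc{S}_\chi}(G_{K_v},A[\lambda])$, after which $H^1_{\mc{S}_{1_K}}\cap H^1_{\mc{S}_\chi}$ is a proper subspace of the line $H^1_{\mc{S}_\chi}$, hence $0$.

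The heart of the argument, and the step I expect to be the main obstacle, is the following. Choose $y\in A_\chi(K_v)$ with $y\notin\lambda A_\chi(K_v)$ (possible since $A_\chi(K_v)/\lambda A_\chi(K_v)\cong A_\chi(K_v)[\lambda]\neq0$); its Kummer class generates $H^1_{\mc{S}_\chi}(G_{K_v},A[\lambda])$, and I claim it is ramified, equivalently $y\notin\lambda A_\chi(K_v^\mur)$. Following \S\ref{twisting}, with $\psi\colon A_\chi\to A$ satisfying $\chi(\sigma)=(\sigma . \psi)\circ\psi^{-1}$ one checks that $\psi(A_\chi(K_v))=\{z\in A(\bar{K}_v):\sigma z=\chi(\sigma)(z)\ \forall\sigma\in G_{K_v}\}$ and $\psi(A_\chi(K_v^\mur))=\{z\in A(\bar{K}_v):\sigma z=\chi(\sigma)(z)\ \forall\sigma\in\Gal(\bar{K}_v/K_v^\mur)\}$; write $B_0\subseteq B_1$ for these, both stable under $\lambda$ (as $\lambda$ is defined over $K$ and commutes with every element of $H$). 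The key facts are: (i) $A[\lambda]=\psi(A_\chi[\lambda])\subseteq B_0$, since $A_\chi[\lambda]\subseteq A_\chi(K_v)$ by triviality of the $G_{K_v}$-action; (ii) $B_1\cap A[p^\infty]=A[\lambda]$, because $\Gal(\bar{K}_v/K_v^\mur)$ acts trivially on $A[p^\infty]$ (good reduction, $v\nmid p$) while $\{\chi(\sigma):\sigma\in\Gal(\bar{K}_v/K_v^\mur)\}=H$ — this is where the hypotheses that $F_w=\bar{K}_v^{\ker\chi}$ is totally ramified of degree $p^n=|H|$ (so that the inertia group surjects onto $G_{K_v}/\ker\chi$) and $\ker(1-\eta)=\ker\lambda$ enter, forcing $B_1\cap A[p^\infty]=\bigcap_{h\in H}\ker(1-h)\cap A[p^\infty]=\ker(1-\eta)\cap A[p^\infty]=A[\lambda]$; and (iii) $A[\lambda]\cap\lambda B_1=0$, since $P=\lambda w$ with $w\in B_1$ forces $w\in A[\lambda^2]\cap B_1\subseteq A[p^2]\cap B_1=A[\lambda]$, whence $\lambda w=0$. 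Granting these, (i), (iii) and Lemma \ref{size_and_ramified}(1) show that $A[\lambda]\hookrightarrow B_0\twoheadrightarrow B_0/\lambda B_0$ is an injection of groups of order $p$, hence an isomorphism, so there is a nonzero $Q\in A[\lambda]$ with $\psi(y)-Q\in\lambda B_0\subseteq\lambda B_1$; then $\psi(y)\equiv Q\pmod{\lambda B_1}$ while $Q\notin\lambda B_1$ by (iii), so $\psi(y)\notin\lambda B_1$, i.e.\ $y\notin\lambda A_\chi(K_v^\mur)$. Equivalently the restriction of the Kummer class of $y$ to $\Gal(\bar{K}_v/K_v^\mur)$ is nonzero, so this class is ramified and we are done. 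I expect the delicate points to be the clean identification of the twisted-invariants groups $B_0,B_1$ and the verification of (ii).
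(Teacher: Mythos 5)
Your argument is internally coherent, but it leans on a reading of the hypotheses that the paper itself does not respect. You take ``prime degree $p$'' from \S\ref{setup} to mean $|A[\lambda]| = p$, which gives the dichotomy that $A(K_v)[\lambda]$ is $0$ or all of $A[\lambda]$, and lets you assume $G_{K_v}$ acts trivially on $A[\lambda]$. But the paper applies this lemma (through Theorem \ref{main_bg}) to $\lambda = \hat{\varphi}\times\varphi$ on $A = E\times E'$, which has degree $p^2$ and $\dim_{\F_p}A[\lambda] = 2$; indeed the proof of Lemma \ref{half_ramified} explicitly uses $\dim_{\F_2}A_c(K_v)[\lambda] = 2$. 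In the sextic case $A[\lambda] = E[\varphi]\times E'[\hat{\varphi}]$ with $E'[\hat{\varphi}]$ generated by $(0,\sqrt{-27})$, so for any $v\nmid 6\infty$ with $\sqrt{-3}\notin K_v$ the group $A(K_v)[\lambda]$ is a \emph{proper nonzero} subspace of $A[\lambda]$ and $G_{K_v}$ acts nontrivially. Your strategy then breaks at several places at once: you no longer have $A_\chi[\lambda]\subset A_\chi(K_v)$ (so (i) fails), the map $A[\lambda]\to B_0/\lambda B_0$ is no longer a bijection between groups of equal size, and $H^1_{\mc{S}_{1_K}}$ is a proper subspace of $H^1_\mur$, so producing ramified Kummer classes in $H^1_{\mc{S}_\chi}$ no longer forces the intersection to vanish. (Your construction does extend, with no real change, to trivial action and $\dim A[\lambda]>1$, so it would cover the quartic application; the obstruction is the nontrivial-action places occurring in the sextic one.)

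The paper's proof is organized precisely to avoid any assumption on the action. It first shows the norm $A(F_w)/\lambda A(F_w)\to A(K_v)/\lambda A(K_v)$ is zero (using Lemma \ref{size_and_ramified}(2) and $p\mid[F_w:K_v]$), hence $\mcor$ annihilates the local condition over $F_w$; taking orthogonal complements under the local Tate pairing converts the vanishing of the intersection into $H^1_{\mc{S}_{1_K}} + H^1_{\mc{S}_\chi} = H^1(G_{K_v}, A[\lambda])$, which after using $\mres^{-1}(H^1_{\mc{S}_{1_F}}(G_{F_w},A[\lambda])) = H^1(G_{K_v},A[\lambda])$ reduces to $\ker(\mres)\subset H^1_{\mc{S}_{1_K}} + H^1_{\mc{S}_\chi}$. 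It then shows that the explicit map $\phi\colon A(K_v)[\lambda]\to\ker(\mres)$, $\phi(P) = i(P) - i_\chi(\psi^{-1}(P))$, whose image clearly lies in the sum, is an isomorphism onto $\ker(\mres)$. The injectivity computation $\phi'(P)(\tau) = Q - \chi(\tau)(Q)$ with $\lambda Q = P$ is essentially what you do with your $Q$, but the paper first restricts to $K_v(\zeta_{p^n})$ so that $\chi$ becomes an honest homomorphism and $\lambda^{-1}(P)$ becomes rational; this step, together with the corestriction/duality scaffolding, is exactly what removes the need for $A[\lambda]$ to be $K_v$-rational, and is the piece your proposal is missing.
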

\begin{proof}
The last statement follows from the definition of $\delta_v(A, \chi)$ and Lemma \ref{size_and_ramified}(1).

To prove the intersection, first note that the norm map $N: A(F_w)/\lambda(A(F_w)) \ra A(K_v)/\lambda(A(K_v))$ is 0.  Indeed, let $a \in A(F_w)$.  By Lemma \ref{size_and_ramified}(2), we can write $a = \lambda(b) + c$ with $b \in A(F_w)$ and $c \in A(K_v)$.  Then $N(a) = \lambda(N(b)) + [F_w: K_v]c \in \lambda(A(K_v))$ because $p \mid [F_w: K_v]$.  Because $\mcor: H^1(G_{F_w}, A[\lambda]) \ra H^1(G_{K_v}, A[\lambda])$ induces the norm map when restricted to the image of the Kummer map, this implies that $\mcor(H^1_{\mc{S}_{1_F}}(G_{F_w}, A[\lambda])) = 0$.

Now taking orthogonal complements,
\[
H^1_{\mc{S}_{1_K}}(G_{K_v}, A[\lambda]) \cap H^1_{\mc{S}_\chi}(G_{K_v}, A[\lambda]) = 0
\]
is equivalent to
\[
H^1_{\mc{S}_{1_K}}(G_{K_v}, A[\lambda]) + H^1_{\mc{S}_\chi}(G_{K_v}, A[\lambda]) = H^1(G_{K_v}, A[\lambda]).
\]
Thus it suffices to show
\[
H^1(G_{K_v}, A[\lambda]) \subset H^1_{\mc{S}_{1_K}}(G_{K_v}, A[\lambda]) + H^1_{\mc{S}_\chi}(G_{K_v}, A[\lambda]).
\]

So, let $x \in H^1(G_{K_v}, A[\lambda])$.  By (\ref{res_cor}) and properties of the cup product, along with the previous paragraph, we have
\[
\mres^{-1}(H^1_{\mc{S}_{1_F}}(G_{F_w}, A[\lambda])) = \left(\mcor(H^1_{\mc{S}_{1_F}}(G_{F_w}, A[\lambda]))\right)^\perp = 0^\perp = H^1(G_{K_v}, A[\lambda]).
\]
Thus $\mres(x) \in H^1_{\mc{S}_{1_F}}(G_{F_w}, A[\lambda]) = \mres\left(H^1_{\mc{S}_{1_K}}(G_{K_v}, A[\lambda])\right)$, so there is a $y \in H^1_{\mc{S}_{1_K}}(G_{K_v}, A[\lambda])$ such that $x - y \in \ker(\mres)$.  Then to prove the claim, it suffices to show
\[
\ker(\mres) \subset H^1_{\mc{S}_{1_K}}(G_{K_v}, A[\lambda]) + H^1_{\mc{S}_\chi}(G_{K_v}, A[\lambda]).
\]

Let $\psi: A_\chi \xra{\sim} A$ be an isomorphism such that $\sigma . \psi = \chi(\sigma) \circ \psi$ for $\sigma \in G_K$, necessarily defined over $F_w$.  Let $i$ and $i_\chi$ denote the Kummer maps for $A(K_v)/\lambda(A(K_v))$ and $A_\chi(K_v)/\lambda(A_\chi(K_v))$, respectively, with images in $H^1(G_{K_v}, A[\lambda])$.  Define a map
\[
\phi: A(K_v)[\lambda] \ra \ker(\mres)
\]
\[
\phi(P) := i(P) - i_\chi(\psi^{-1}(P)).
\]
Note that $\phi$ does indeed have image contained in $\ker(\mres)$, since if $P \in A(K_v)[\lambda]$, then $\mres(i(P)) = \mres(i_\chi(\psi^{-1}(P)))$ by the commutative diagram
\[
\begindc{\commdiag}[10]
\obj(-100, 0)[topL]{$A(K_v)/\lambda(A(K_v))$}
\obj(0, 0)[topC]{$H^1(G_{K_v}, A[\lambda])$}
\obj(100, 0)[topR]{$A_\chi(K_v)/\lambda(A_\chi(K_v))$}
\obj(-100, -40)[bottomL]{$A(F_w)/\lambda(A(F_w))$}
\obj(0, -40)[bottomC]{$H^1(G_{F_w}, A[\lambda])$}
\obj(100, -40)[bottomR]{$A_\chi(F_w)/\lambda(A_\chi(F_w))$}
\mor{topC}{bottomC}{$\mres$}
\mor{topL}{bottomL}{}
\mor{topR}{bottomR}{}
\mor{topL}{topC}{$i$}[\atleft, \injectionarrow]
\mor{bottomL}{bottomC}{}[\atleft, \injectionarrow]
\mor{topR}{topC}{$i_\chi$}[\atright, \injectionarrow]
\mor{bottomR}{bottomC}{}[\atright, \injectionarrow]
\cmor((100,-50)(95,-60)(80,-65)(0,-65)(-80,-65)(-95,-60)(-100,-50))
  \pup(0, -59){$\psi$}
\cmor((100,-50)(95,-60)(80,-65)(0,-65)(-80,-65)(-95,-60)(-100,-50))
  \pup(0, -71){$\sim$}
\enddc
\]

Since
\[
\Ima(\phi) \subset H^1_{\mc{S}_{1_K}}(G_{K_v}, A[\lambda]) + H^1_{\mc{S}_\chi}(G_{K_v}, A[\lambda]),
\]
to prove the claim, it suffices to prove that $\phi$ is surjective onto $\ker(\mres)$.  But the restriction of $\mres$ to $H^1_{\mc{S}_{1_K}}(G_{K_v}, A[\lambda])$ is injective by Lemma \ref{size_and_ramified}(2), so
\begin{align*}
\dim_{\F_p}(\ker(\mres))
&= \dim_{\F_p}(H^1(G_{K_v}, A[\lambda])) - \dim_{\F_p}(\Ima(\mres)) \\
&\le \dim_{\F_p}(H^1(G_{K_v}, A[\lambda])) - \dim_{\F_p}(H^1_{\mc{S}_{1_K}}(G_{K_v}, A[\lambda])) \\
&= \dim_{\F_p}(H^1_{\mc{S}_{1_K}}(G_{K_v}, A[\lambda])) \\
&= \dim_{\F_p}(A(K_v)[\lambda]).
\end{align*}
Thus surjectivity will follow if we prove that $\phi$ is injective.

To see injectivity, let $\phi'$ be the composition of $\phi$ with the restriction map
\[
H^1(G_{K_v}, A[\lambda]) \ra H^1(G_{K_v(\zeta_{p^n})}, A[\lambda]),
\]
where $\zeta_{p^n}$ is a primitive $p^n$-th root of unity.  It suffices to prove that $\phi'$ is injective.  Easily
\[
\Ima(\phi') \subset \ker\left(\mres: H^1(G_{K_v(\zeta_{p^n})}, A[\lambda]) \ra H^1(G_{F_w(\zeta_{p^n})}, A[\lambda]) \right).
\]
Then by inflation-restriction, we can treat $\phi'$ as a map
\[
\phi': A(K_v)[\lambda] \ra H^1(\Gal(F_w(\zeta_{p^n}) / K_v(\zeta_{p^n})), A(F_w(\zeta_{p^n}))[\lambda]).
\]
By Lemma \ref{size_and_ramified}(2) and the fact that $F_w(\zeta_{p^n})/K_v(\zeta_{p^n})$ is totally ramified, $A(F_w(\zeta_{p^n}))[\lambda] = A(K_v(\zeta_{p^n}))[\lambda]$.  Thus
\begin{align*}
&H^1(\Gal(F_w(\zeta_{p^n}) / K_v(\zeta_{p^n})), A(F_w(\zeta_{p^n}))[\lambda]) \\ &= \Hom(\Gal(F_w(\zeta_{p^n}) / K_v(\zeta_{p^n})), A(K_v(\zeta_{p^n}))[\lambda]).
\end{align*}

For $\tau \in \Gal(F_w(\zeta_{p^n}) / K_v(\zeta_{p^n}))$ and $P \in A(K_v)[\lambda]$, we can compute $\phi'(P)(\tau)$ as follows.  Choose a preimage $Q \in A(\bar{K}_v)$ of $P$ under $\lambda$ and a preimage $\sigma \in G_{K_v(\zeta_{p^n})}$ of $\tau$ under the natural projection.  Then by the definitions of the Kummer map and of the twist by $\chi$,
\begin{align*}
\phi'(P)(\tau)
&= (\sigma . Q - Q) - \psi \left(\sigma . (\psi^{-1}(Q)) - \psi^{-1}(Q)\right) \\
&= \sigma . Q - \psi(\sigma . (\psi^{-1}(Q))) \\
&= \sigma . Q - \chi(\sigma)(Q).
\end{align*}
Next, by inflation-restriction, we can consider $\chi \in \Hom(\Gal(F_w(\zeta_{p^n}) / K_v(\zeta_{p^n})), H)$.  Then $\chi(\tau)$ is well-defined.  Also, $K_v(\zeta_{p^n}, \lambda^{-1}(P))/K_v(\zeta_{p^n})$ is Galois, and it is unramified because $\lambda^{-1}(P) \subset A[p]$, $v \nmid p$, and $A$ has good reduction at $v$, hence $K_v(\zeta_{p^n}, \lambda^{-1}(P)) \cap F_w(\zeta_{p^n}) = K_v(\zeta_{p^n})$.  Thus we can choose $\sigma$ lifting $\tau$ such that $\sigma . Q = Q$.  Then we can write
\[
\phi'(P)(\tau) = Q - \chi(\tau)(Q).
\]
By the assumption $F_w = \bar{K}_v^{\ker(\chi)}$, $\chi \in \Hom(\Gal(F_w(\zeta_{p^n}) / K_v(\zeta_{p^n})), H)$ is injective, hence also surjective.  Thus setting $\tau = \chi^{-1}(\eta)$, we have $\ker(1 - \chi(\tau)) = \ker(\lambda)$.  Then for all $P \neq 0$, $\phi'(P)(\tau) \neq 0$ because $\lambda(Q) = P \neq 0$, so $\phi'(P)$ is a nontrivial homomorphism.
\end{proof}

\subsection{The case $H \cong \mu_{p^n}$}\label{mu_n}
Fix $n \in \mb{N}$, and suppose $H \cong \mu_{p^n}$ as $G_K$-modules.  Then we have the Kummer isomorphism $H^1(G_K, H) \cong H^1(G_K, \mu_{p^n}) \cong K^*/(K^*)^{p^n}$.  Using this identification, for any $d \in K^*$ (implicitly taken modulo ${p^n}$-th powers), we can define the twist $A_d$ of $A$ by $d$ over $K$, as well as the local invariant $\delta_v(A, d)$.

% Now if $c \equiv d \pmod{(K_v^*)^{p^n}}$, then $\delta_v(A, c) = \delta_v(A, d)$ by definition.  Combining this observation with Theorem \ref{parity_twist_formula} and Lemmas \ref{unramified} and \ref{totally_ramified} gives our main result for this section.

Now combining Theorem \ref{parity_twist_formula} and Lemmas \ref{unramified} and \ref{totally_ramified}, we get our main result for this section.

\begin{thm}\label{main_bg}
Assume $\lambda$ satisfies conditions (\ref{poonen_rains_assumption}) and (\ref{yu_assumption}), assume $H \cong \mu_{p^n}$ as $G_K$-modules, and assume $\ker(1 - \eta) = \ker(\lambda)$ for some generator $\eta$ of $H$.  Let $d \in K^*$.  Then
\begin{align*}
d_\lambda(A_d/K) - d_\lambda(A/K) \equiv& \sum_{v \in \Sigma} \delta_v(A, d) + \sum_{\substack{v \notin \Sigma:\\ p \nmid \ord_v(d)}} \dim_{\mb{F}_p}(A(K_v)[\lambda]) \\ &+ \sum_{\substack{v \notin \Sigma :\\ p \mid \ord_v(d), \\ p^n \nmid \ord_v(d)}} \delta_v(A, d) \pmod{2}.
\end{align*}
\end{thm}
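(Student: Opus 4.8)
The plan is to apply the parity twist formula (Theorem \ref{parity_twist_formula}) to the cocycle $\chi \in H^1(G_K, H)$ representing $d$ under the Kummer isomorphism $H^1(G_K, H) \cong H^1(G_K, \mu_{p^n}) \cong K^*/(K^*)^{p^n}$, and then to evaluate the local terms $\delta_v(A, d)$ for $v \notin \Sigma$ by means of Lemmas \ref{unramified} and \ref{totally_ramified}. Since $\lambda$ satisfies (\ref{poonen_rains_assumption}) and (\ref{yu_assumption}), Theorem \ref{parity_twist_formula} gives
\[
d_\lambda(A_d/K) - d_\lambda(A/K) \equiv \sum_v \delta_v(A, d) \pmod{2},
\]
so it suffices to split this sum as $\sum_{v \in \Sigma} + \sum_{v \notin \Sigma}$ and to show that, modulo $2$, the part with $v \notin \Sigma$ equals the last two sums in the statement.

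Fix $v \notin \Sigma$; by the definition of $\Sigma$, $A$ has good reduction at $v$ and $v \nmid p\infty$, so Lemmas \ref{unramified} and \ref{totally_ramified} are potentially applicable. After choosing a $p^n$-th root $d^{1/p^n} \in \bar{K}_v$, represent $d$ on $G_{K_v}$ by the Kummer cocycle $\sigma \mapsto \sigma(d^{1/p^n})/d^{1/p^n}$, whose kernel is $G_{K_v(d^{1/p^n})}$; thus $F_w := \bar{K}_v^{\ker(\chi)} = K_v(d^{1/p^n})$. Set $m := \ord_v(d)$, which is well-defined modulo $p^n$, and note this is all that matters since the conditions $p \nmid m$, $p \mid m$, and $p^n \mid m$ depend only on $m \bmod p^n$. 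Any $\alpha \in F_w$ with $\alpha^{p^n} = d$ has normalized valuation $m\,e(F_w/K_v)/p^n$, which must be an integer, so $p^n/\gcd(m, p^n)$ divides $e(F_w/K_v)$. I then distinguish three cases. If $p \nmid m$, then $\gcd(m, p^n) = 1$ and $p^n \le e(F_w/K_v) \le [F_w : K_v] \le p^n$, so $F_w/K_v$ is totally ramified of degree $p^n$; since the remaining hypotheses of Lemma \ref{totally_ramified} ($H \cong \mu_{p^n}$, $\ker(1 - \eta) = \ker(\lambda)$, good reduction at $v$, $v \nmid p\infty$) hold, it yields $\delta_v(A, d) \equiv \dim_{\mb{F}_p}(A(K_v)[\lambda]) \pmod{2}$. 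If $p^n \mid m$, then $\pi_v^m$ is a $p^n$-th power, so $d$ is congruent modulo $(K_v^*)^{p^n}$ to a unit $u$, and since $v \nmid p$ the reduction of $x^{p^n} - u$ modulo $v$ is separable over $k_v$, so $F_w = K_v(u^{1/p^n})/K_v$ is unramified and Lemma \ref{unramified} gives $\delta_v(A, d) = 0$. If $p \mid m$ but $p^n \nmid m$, I keep the term $\delta_v(A, d)$ unchanged.

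Assembling the pieces finishes the proof: a place $v \notin \Sigma$ with $\ord_v(d) = 0$ lies in the case $p^n \mid m$ and so contributes $0$; there are only finitely many $v$ with $\ord_v(d) \ne 0$; and the three cases partition the places $v \notin \Sigma$ into exactly the three sums appearing in the statement, with the $p \nmid \ord_v(d)$ contributions rewritten via Lemma \ref{totally_ramified} and the $p^n \mid \ord_v(d)$ contributions dropped via Lemma \ref{unramified}. Combining with the displayed congruence yields the theorem. I expect the only point requiring genuine care to be the identification of $F_w$ with the Kummer extension $K_v(d^{1/p^n})$ together with the resulting observation that $p \nmid \ord_v(d)$ is precisely the condition forcing $[F_w : K_v] = e(F_w/K_v) = p^n$, which is exactly the hypothesis Lemma \ref{totally_ramified} needs; everything else is bookkeeping with the parity twist formula.
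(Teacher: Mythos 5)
Your proposal is correct and follows exactly the route the paper sketches (the paper states the theorem as an immediate consequence of Theorem \ref{parity_twist_formula} together with Lemmas \ref{unramified} and \ref{totally_ramified}, without writing out the details). Your identification of $F_w = \bar{K}_v^{\ker(\chi)}$ with $K_v(d^{1/p^n})$, the valuation argument showing $F_w/K_v$ is totally ramified of degree $p^n$ when $p \nmid \ord_v(d)$ and unramified when $p^n \mid \ord_v(d)$, and the resulting three-way partition of places $v \notin \Sigma$ are precisely the bookkeeping the paper leaves implicit.
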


\section{Quartic Twists of Elliptic Curves with $j$-Invariant 1728}\label{quartic}
Fix a number field $K$.  Let $E/K$ be the elliptic curve with affine equation $y^2 = x^3 + x$, and let $E'/K$ be the elliptic curve $y^2 = x^3 - 4x$.  We wish to classify the conjectural rank parities of quartic twists of $E/K$, which have the form $E_d/K: y^2 = x^3 + dx$ for $d \in K^*/(K^*)^4$.  Note that these are all of the elliptic curves over $K$ with $j$-invariant 1728.

We have dual degree two isogenies (\cite[Example III.4.5]{silverman})
\begin{align*}
\varphi: E_d &\rightarrow E'_d
&\hat{\varphi}: E'_d &\rightarrow E_d \\
(x, y) &\mapsto \left( \frac{y^2}{x^2}, \frac{y(d-x^2)}{x^2} \right)
&(x', y') &\mapsto \left( \frac{y'^2}{4x'^2}, \frac{y'(-4d-x'^2)}{8x'^2} \right).
\end{align*}
From these, we can construct a self-dual degree two isogeny
\begin{align*}
\lambda: E_d \times E'_d &\rightarrow E_d \times E'_d \\
(P, Q) &\mapsto (\hat{\varphi}(Q), \varphi(P)).
\end{align*}
Set $A := E \times E'$, from which $A_d = E_d \times E'_d$.  Note that the kernel of $\lambda: A_d \ra A_d$ is
\[
A_d[\lambda] = E_d[\varphi] \times E'_d[\hat{\varphi}] = \{O, (0, 0)\} \times \{O', (0, 0)\},
\]
and it is defined over $K$.  By the following result, for our problem, it suffices to study how the $\lambda$-Selmer rank of $A$ varies with $d$.

\begin{lemma}
\label{tau_to_2}
Assuming the Shafarevich-Tate conjecture, for any $d \in K^*$,
\[
d_\lambda(A_d/K)) \equiv \rk(E_d/K) \pmod{2}.
\]
\end{lemma}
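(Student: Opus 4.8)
The plan is to relate the $\lambda$-Selmer rank of $A_d = E_d \times E'_d$ to the $2$-Selmer ranks, and ultimately the Mordell--Weil ranks, of $E_d$ and $E'_d$ individually. First I would note that $\lambda: A_d \to A_d$ is built from $\varphi: E_d \to E'_d$ and $\hat\varphi: E'_d \to E_d$, so that the $\lambda$-Selmer group of $A_d$ decomposes as a direct sum $\msel^\varphi(E_d/K) \oplus \msel^{\hat\varphi}(E'_d/K)$; consequently
\[
d_\lambda(A_d/K) = \dim_{\F_2}\msel^\varphi(E_d/K) + \dim_{\F_2}\msel^{\hat\varphi}(E'_d/K).
\]
This reduces the problem to understanding the parity of the right-hand side. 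The standard tool here is the exact sequence relating the $\varphi$- and $\hat\varphi$-descent Selmer groups to the full $2$-Selmer group: there is an exact sequence
\[
0 \to \frac{E'_d(K)[\hat\varphi]}{\varphi(E_d(K)[2])} \to \msel^\varphi(E_d/K) \to \msel^2(E_d/K) \to \msel^{\hat\varphi}(E'_d/K) \to \frac{\Sha(E'_d/K)[\hat\varphi]}{\varphi(\Sha(E_d/K)[2])} \to \cdots,
\]
or, more simply, one uses that $\dim\msel^\varphi(E_d) + \dim\msel^{\hat\varphi}(E'_d) \equiv \dim\msel^2(E_d/K) \pmod 2$ up to correction terms coming from rational torsion. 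The cleanest route is to invoke the known relation (essentially a consequence of the snake lemma applied to multiplication by $2$ factored as $\hat\varphi \circ \varphi$) giving $d_\lambda(A_d/K) \equiv d_2(E_d/K) \pmod{2}$, after checking that the torsion/obstruction contributions are even or cancel; since $\varphi$ and $\hat\varphi$ are dual, the contributions from $E_d$ and $E'_d$ are arranged symmetrically and the discrepancy is controlled by $\dim_{\F_2}(E_d(K)[2]/\cdots)$ type terms that one checks contribute evenly. Here one uses that $E_d[\varphi]$ and $E'_d[\hat\varphi]$ are each isomorphic to $\Z/2$ over $K$ (generated by $(0,0)$), so the bookkeeping is explicit and finite.

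Next I would bring in the Shafarevich-Tate conjecture. Assuming $\Sha(E_d/K)$ is finite, the Cassels--Tate pairing is alternating and nondegenerate on $\Sha(E_d/K)$, so $\dim_{\F_2}\Sha(E_d/K)[2]$ is even. Combined with the standard exact sequence
\[
0 \to E_d(K)/2E_d(K) \to \msel^2(E_d/K) \to \Sha(E_d/K)[2] \to 0,
\]
this yields $d_2(E_d/K) \equiv \dim_{\F_2}\bigl(E_d(K)/2E_d(K)\bigr) \pmod 2$. Since $E_d(K)/2E_d(K)$ has dimension $\rk(E_d/K) + \dim_{\F_2}E_d(K)[2]$, and $E_d: y^2 = x^3 + dx$ has $E_d(K)[2] = \{O, (0,0)\}$ of order exactly $2$ for generic $d$ (with the possibility of extra $2$-torsion only when $-d$ or related quantities are squares, a finite condition one handles separately), we get $d_2(E_d/K) \equiv \rk(E_d/K) + 1 \pmod 2$ in the generic case — but one must be careful, as this parity shift needs to be matched against the analogous relation for $d_\lambda$. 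Chaining everything together, the $+1$'s and torsion terms must be shown to cancel, giving $d_\lambda(A_d/K) \equiv \rk(E_d/K) \pmod 2$.

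The step I expect to be the main obstacle is the careful accounting of the torsion and $\Sha$ contributions when passing between $d_\lambda(A_d)$, $d_2(E_d)$, and $\rk(E_d)$ — in particular verifying that the various parity corrections (the ranks of $E_d(K)[\varphi]/\varphi(E_d(K)[2])$, $E'_d(K)[\hat\varphi]$, and the $\Sha[\hat\varphi]/\varphi\Sha[2]$ terms, together with $\dim E_d(K)[2]$) sum to an even number, uniformly in $d$, so that no residual $+1$ survives. This is where the Cassels--Tate pairing on $\Sha(E_d)$ (even-dimensional $2$-torsion) and the duality between $\varphi$ and $\hat\varphi$ are both essential; the self-duality of $\lambda$ on the principally polarized $A_d = E_d \times E'_d$ is precisely what makes the symmetric cancellation work. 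Everything else is the routine snake-lemma diagram chase plus the well-known computation of $E_d(K)[2]$ and $E'_d(K)[2]$, which I would state and defer rather than grind through.
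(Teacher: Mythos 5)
Your outline follows essentially the same route as the paper: decompose $\msel^{(\lambda)}(A_d/K)$ as $\msel^{(\varphi)}(E_d/K)\oplus\msel^{(\hat\varphi)}(E'_d/K)$, chain through the standard $\varphi$--$2$--$\hat\varphi$ exact sequence, and invoke Cassels' result that the $\Sha$-quotient on the right is even-dimensional when $\Sha$ is finite. One intermediate claim is off, though: $d_\lambda(A_d/K)\equiv d_2(E_d/K)\pmod 2$ is \emph{not} true in general. From the exact sequence one gets
\[
d_\lambda(A_d/K)\equiv d_2(E_d/K)+\dim_{\F_2}\!\bigl(E'_d(K)[\hat\varphi]/\varphi(E_d(K)[2])\bigr)
\equiv d_2(E_d/K)+\dim E_d(K)[\varphi]+\dim E'_d(K)[\hat\varphi]+\dim E_d(K)[2]\pmod 2,
\]
and the extra $\dim E_d(K)[2]$ term has parity depending on $d$ (it is $2$ precisely when $-d$ is a square). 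The resolution — and this is the uniform cancellation you were hoping for — is that $d_2(E_d/K)\equiv\rk(E_d/K)+\dim E_d(K)[2]\pmod 2$ under the $\Sha$ conjecture, so the two $\dim E_d(K)[2]$ contributions cancel, leaving $\rk(E_d/K)+\dim E_d(K)[\varphi]+\dim E'_d(K)[\hat\varphi]$. Since $E_d[\varphi]=\{O,(0,0)\}$ and $E'_d[\hat\varphi]=\{O',(0,0)\}$ are both $K$-rational of order $2$ for every $d$, this last correction is $1+1\equiv 0$, giving the lemma. So the "main obstacle" you flagged is real but resolves exactly as you suspected; the step to make precise is that the $\dim E_d(K)[2]$ terms cancel rather than that $d_\lambda\equiv d_2$.
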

\begin{proof}
We have the well-known exact sequence
\begin{align*}
0 \ra &\frac{E'_d(K)[\hat{\varphi}]}{\varphi(E_d(K)[2])} \ra \msel^{(\varphi)}(E'_d/K) \ra \msel^{(2)}(E_d/K) \\ &\ra \msel^{(\hat{\varphi})}(E_d/K) \ra \frac{\Sha(E'_d/K)[\hat{\varphi}]}{\varphi(\Sha(E_d/K)[2])} \ra 0.
\end{align*}
Under our assumption, properties of the Cassels pairing imply that the last term has even dimension \cite[Corollary to Theorem 1.2]{cassels_pairing_results}.  Then using the fact that $\msel^{(\lambda)}(A_d/K) \cong \msel^{(\varphi)}(E_d/K) \times \msel^{(\hat{\varphi})}(E'_d/K)$, we have
\begin{align*}
d_\lambda(A_d/K)
&\equiv d_2(E_d/K) + \dim_{\F_2}\left(\frac{E'_d(K)[\hat{\varphi}]}{\varphi(E_d(K)[2])} \right) \\
&\equiv d_2(E_d/K) + \dim_{\F_2}(E_d(K)[2]) \\ &\quad + \dim_{\F_2}(E'_d(K)[\hat{\varphi}]) + \dim_{\F_2}(E_d(K)[\varphi]) \\
&\equiv \rk(E_d/K) + \dim_{\F_2}(E'_d(K)[\hat{\varphi}]) + \dim_{\F_2}(E_d(K)[\varphi]) \\
&\equiv \rk(E_d/K) \pmod{2}
\end{align*}
using the explicit description of $E_d[\varphi]$ and $E'_d[\hat{\varphi}]$ given above.
\end{proof}

\begin{rem}
\label{rem_mu_4}
Suppose $\mu_4 \subset K$.  Then $E_d \cong E_d'$ over $K$, and the isomorphism identifies $\varphi$ with $\hat{\varphi}$.  Hence
\[
d_\lambda(A_d/K) = 2d_\varphi(E_d/K) \equiv 0 \pmod{2},
\]
so that conjecturally $\rk(E_d/K) \equiv 0 \pmod{2}$ for all values of $d$ by Lemma \ref{tau_to_2}.  Thus we are primarily interested in the case $\mu_4 \not\subset K$.
\end{rem}

To solve our problem for general $K$, we wish to apply Theorem \ref{main_bg} to $A/K$ and $\lambda$, with $p = 2$.  Note that the finite places of bad reduction for $A$ are at most those dividing $2$.  For our subgroup $H$ of $\mbox{Aut}(A)$, we choose the diagonal embedding of $\mu_4$ into $\mbox{Aut}(E) \times \mbox{Aut}(E')$ (recall $\mbox{Aut}(E) \cong \mbox{Aut}(E') \cong \mu_4$ by \cite[Corollary III.10.2]{silverman}).  Thus $\zeta \in \mu_4$ acts on $A$ as
\[
[\zeta]((x, y), (x', y')) = ((\zeta^2x, \zeta^3y), (\zeta^2x', \zeta^3y')).
\]
By our description of $A[\lambda]$ given above, $H$ acts trivially on $A[\lambda]$, and $\ker(1 - [\zeta]) = \ker(\lambda)$ for any generator $\zeta$ of $H$.  Also, $H$ commutes with $\lambda$.

Now $A$ is self-dual since $E$ and $E'$ are.  Since $p = 2$, we must additionally verify conditions (\ref{poonen_rains_assumption}) and (\ref{yu_assumption}).  We can identify $\Pic_0(A)$ with the group of Weil divisors of $A$ modulo principal divisors.  The natural isomorphism $A \xra{\sim} \Pic_0(A)$ is then given by
\[
(P, Q) \mapsto [P \times E'] - [O \times E'] + [E \times Q] - [E \times O'],
\]
where $O$ and $O'$ denote the identity elements of $E$ and $E'$, respectively.

Let $\Gamma(-\varphi) = \{(R, -\varphi(R)) \mid R \in E\}$ be the graph of $-\varphi$ in $E \times E'$.  Then $[\Gamma(-\varphi)]$ is a divisor on $A$.  Using the seesaw principle (\cite[p.\ 241]{lang}) and the above isomorphism $A \cong \Pic_0(A)$, one can see that
\begin{align*}
\phi_{[\Gamma(-\varphi)]}(P, Q) &= (2P + \hat{\varphi}(Q), Q + \varphi(P)),
\end{align*}
since the corresponding divisors are equivalent when restricted to $U \times E'$ or $E \times V$ for any $U \in E$ or $V \in E'$.  Also, $\phi_{[O \times E']}(P, Q) = \phi_{[(0, 0) \times E']}(P, Q) = (P, O')$ and $\phi_{[E \times O']}(P, Q) = (O, Q)$.  Thus letting
\[
D := [\Gamma(-\varphi)] - [O \times E'] - [(0, 0) \times E'] - [E \times O'],
\]
we have $\phi_D = \lambda$.  Obviously $D$ is symmetric, so this proves condition (\ref{poonen_rains_assumption}).

We now verify condition (\ref{yu_assumption}).  Let $\bar{K}[A]$ and $\bar{K}(A)$ denote the ring of regular functions on $A$ and the field of rational functions on $A$, respectively.  For $g \in \bar{K}(A)$, let $\mdiv(g) \in \Pic_0(A)$ denote the Weil divisor corresponding to $g$.  For $X \in A$, let $T_X$ denote the translation-by-$X$ map.  Then in terms of Weil divisors, we can write the Heisenberg group $\mathcal{H}_A(D)$ used in the construction of the canonical global metabolic structure (see \cite[\S 4]{poonen_rains}) as
\[
\mathcal{H}_A(D) = \{(X, g) \mid \mbox{$X \in A[\lambda]$, $g \in \bar{K}(A)$, and $\mdiv(g) = T_X^*(D) - D$}\},
\]
with group operation given by $(X, g)(X', g') = (X + X', T_{X'}^*(g)g')$.

\begin{lemma}
\label{heisenberg_group_fixed_by_i}
Let $\zeta_4$ be a primitive 4th root of unity, which acts on $A$ as above.  Let $(X, g) \in \mathcal{H}(D)$, so that $\mdiv(g) = T_X^*(D) - D$.  Then $g \circ [\zeta_4] = g$.
\end{lemma}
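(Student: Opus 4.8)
The plan is to show that $[\zeta_4]$ fixes the Heisenberg group element $(X,g)$ by a direct computation, exploiting that $[\zeta_4]$ fixes the divisor $D$ and permutes the points of $A[\lambda]$ in a compatible way. First I would record the key geometric fact: since $[\zeta_4]$ acts trivially on $A[\lambda]$ (as noted in the paragraph defining $H$), we have $[\zeta_4](X) = X$ for every $X \in A[\lambda]$, and moreover $[\zeta_4] \circ T_X = T_X \circ [\zeta_4]$ as maps on $A$. Next I would check that $[\zeta_4]^* D = D$ as a Weil divisor. This is the crux, and I expect it to be the main obstacle: one must verify that $[\zeta_4]$ fixes each of the four components $\Gamma(-\varphi)$, $O \times E'$, $(0,0) \times E'$, and $E \times O'$ appearing in $D$. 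The last three are straightforward since $[\zeta_4]$ restricts on each factor to an automorphism fixing $O$, $(0,0)$, $O'$ respectively, so it sends each vertical/horizontal fiber to itself as a set. For $\Gamma(-\varphi)$ one must check that $(R, -\varphi(R)) \mapsto ([\zeta_4]R, [\zeta_4](-\varphi(R)))$ again lies on the graph, i.e.\ that $[\zeta_4](-\varphi(R)) = -\varphi([\zeta_4]R)$; this should follow because $\varphi$ commutes with the $\mu_4$-action (the automorphisms are defined compatibly on $E$ and $E'$ via the diagonal embedding, and $\varphi$ intertwines them — concretely one can verify it on the explicit formulas for $\varphi$ and $[\zeta_4]$).

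Granting $[\zeta_4]^* D = D$, the rest is formal. Apply $[\zeta_4]^*$ to the defining relation $\mdiv(g) = T_X^*(D) - D$. Since $[\zeta_4]^* \mdiv(g) = \mdiv(g \circ [\zeta_4])$ and, using $[\zeta_4] \circ T_X = T_X \circ [\zeta_4]$,
\[
[\zeta_4]^*(T_X^* D - D) = T_X^*([\zeta_4]^* D) - [\zeta_4]^* D = T_X^* D - D,
\]
we conclude $\mdiv(g \circ [\zeta_4]) = \mdiv(g)$. Hence $g \circ [\zeta_4]$ and $g$ differ by a nonzero constant $c \in \bar{K}^*$. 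To pin down $c = 1$, I would evaluate both sides at a convenient point fixed by $[\zeta_4]$ where $g$ is defined and nonzero — for instance a generic point, or a point in the support argument — noting that on such a point $g([\zeta_4]P) = g(P)$ forces $c = 1$; alternatively, since $[\zeta_4]$ has finite order $4$, iterating gives $g = g \circ [\zeta_4]^4 = c^4 g$, so $c^4 = 1$, and then one eliminates the remaining roots of unity by checking the value at a single fixed point (the map $[\zeta_4]$ fixes, e.g., $(O,O')$ and the $2$-torsion points, and $g$ can be normalized or is automatically defined and nonzero at one of these).

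I expect the one genuine subtlety to be the verification that $\varphi$ commutes with the diagonal $\mu_4$-action, since $\varphi$ itself is not an isomorphism; but this is exactly the kind of statement that can be read off the explicit degree-two isogeny formulas given just before the lemma, so it reduces to a short symbolic check. Everything downstream — transporting the relation through $[\zeta_4]^*$ and killing the constant — is routine.
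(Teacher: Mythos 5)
Your argument up to the point $g\circ[\zeta_4]=cg$ with $c\in\mu_4$ is correct and agrees with the paper: you correctly identify $[\zeta_4]^*D=D$ (the intertwining $-\varphi\circ[\zeta_4]_E=[\zeta_4]_{E'}\circ(-\varphi)$ does hold, and it is the one nontrivial check), and the formal transport of the divisor relation through $[\zeta_4]^*$ is right.

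The gap is in how you kill the constant $c$. Your proposal is to evaluate $g$ at a fixed point of $[\zeta_4]$ where $g$ is defined and nonzero, but no such point exists. On $E\colon y^2=x^3+x$ the action $(x,y)\mapsto(-x,\zeta_4^{-1}y)$ fixes only $O$ and $(0,0)$, so the fixed locus of $[\zeta_4]$ on $A=E\times E'$ is precisely the four points of $A[\lambda]$; in particular $[\zeta_4]$ does \emph{not} fix the remaining $2$-torsion (it swaps $(\pm i,0)$), contrary to your parenthetical. Moreover every one of these four fixed points lies on the support of $\mdiv(g)=T_X^*(D)-D$: for instance $(O,O')\in\Gamma(-\varphi)$ because $\varphi(O)=O'$, and $((0,0),O')\in\Gamma(-\varphi)$ because $\varphi((0,0))=O'$; the other two points similarly lie on $\Gamma(-\varphi+(0,0))$ and on $E\times(0,0)$. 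So $g$ has no well-defined nonzero value at any $[\zeta_4]$-fixed point, and naive evaluation cannot conclude $c=1$. This is exactly the subtlety the paper's proof is built to circumvent: it restricts $g$ to a $[\zeta_4]$-stable curve $C$ (namely $C=O\times E'$) that does \emph{not} appear in $\mdiv(g)$, so $g|_C$ is a well-defined nonzero rational function on $C$, and then compares leading coefficients of $g$ in the two-step completion $\widehat{\mathcal O_C}\cong\bar K(C)[[t]]$ and $\widehat{\mathcal O^C_Y}\cong\bar K[[u]]$ at a point $Y\in C\cap A[\lambda]$. The point is that $[\zeta_4]^*$ acts on $t$ and $u$ by roots of unity, so the leading coefficient of $g$ is $[\zeta_4]^*$-invariant provided $4\mid\ord_Y(g|_C)$; the paper verifies this by computing $(T_X^*(D)-D)|_C=0$. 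Your proposal would need an analogous argument to be complete; the rest of it is fine.
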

\begin{proof}
We adapt the proof of \cite[Lemma 5.9]{yu}.  When $X = (O, O')$ or $((0, 0), O')$, we have $T_X^*(D) - D = 0$, so $g \in \bar{K}[A] = \bar{K}[E] \otimes_{\bar{K}} \bar{K}[E'] = \bar{K}$.  Then $g$ is constant and the conclusion is trivial.

When $X$ is $(O, (0, 0))$ or $((0, 0), (0, 0))$, first note that
\[
\mdiv(g \circ [\zeta_4]) = [\zeta_4]^*(\mdiv(g)) = [\zeta_4]^*(T_X^*(D) - D) = T_X^*(D) - D = \mdiv(g),
\]
where $\zeta_4$ acts trivially on $X$ because it is in $A[\lambda]$, and where $[\zeta_4]^*D = D$ because $-\varphi \circ (\zeta_4|_E) = (\zeta_4|_{E'}) \circ -\varphi$.  Hence $g \circ [\zeta_4] = cg$ for some $c \in \bar{K}[A]^* = \bar{K}^*$.  Necessarily $c \in \mu_4$.

To show $c = 1$, we claim that it suffices to find a nonsingular curve $C$ on $A$ such that:
\begin{itemize}
  \item $[\zeta_4] C = C$.
  \item The divisor $[C]$ has order 0 in $\mdiv(g)$.
  \item Letting $\mdiv_C(g|_C)$ denote the divisor of $g|_C$ on $C$, there is some $Y \in C \cap A[\lambda]$ such that the order of $Y$ in $\mdiv_C(g|_C)$ is an integer multiple of four.
\end{itemize}

We prove this claim as follows.  Write $\widehat{\mc{O}_C}$ for the completion of the local ring $\mc{O}_C$ of $A$ at $C$ (over $\bar{K}$).  Then $\widehat{\mc{O}_C} \cong \bar{K}(C)[[t]]$, where $\bar{K}(C)$ is the function field of $C/\bar{K}$ and $t$ is a uniformizer for $C$. %(see pg.\ 8 of handwritten notes).
We have an automorphism $[\zeta_4]^* \in \Aut(\bar{K}(C)[[t]])$ induced by $[\zeta_4]$, due to the assumption that $[\zeta_4] C = C$.  Since this automorphism has order four, it sends $t$ to $\zeta t + (\mbox{higher order terms})$ for some $\zeta \in \mu_4$.  Because $\ord_C(g) = 0$ by assumption, we have
\[
g = f_0 + (\mbox{higher order terms})
\]
as a power series in $\bar{K}(C)[[t]]$.  Then
\[
[\zeta_4]^* g = [\zeta_4]^* f_0 + (\mbox{higher order terms}),
\]
where the action of $[\zeta_4]^*$ on $f_0$ is induced by the action of $[\zeta_4]$ on $C$.  Note that $f_0 = g|_C$ since the higher order terms vanish on $C$.

We now repeat this process for $f_0$: let $\widehat{\mc{O}^C_Y} \cong \bar{K}[[u]]$ be the completion of the local ring of $C$ at $Y$, for some uniformizer $u$ at $Y$, and write
\[
f_0 = d u^n + (\mbox{higher order terms})
\]
for some $d \in \bar{K}$, with $n = \ord_Y(f_0) = \ord_Y(g|_C)$.  By assumption, $4 | n$.  The automorphism $[\zeta_4]^*$ of $\bar{K}[[u]]$ sends $u$ to $\zeta' u + (\mbox{higher order terms})$ for some $\zeta' \in \mu_4$, so
\[
[\zeta_4]^* f_0 = d u^n + (\mbox{higher order terms}).
\]
But this shows that $[\zeta_4]^* f_0$ and $f_0$ have the same leading term in $\bar{K}[[u]]$, hence the same is true of $g$ as an element of $\bar{K}(C)[[t]]$.  Thus $c = 1$ and $g \circ [\zeta_4] = g$, as claimed.

It is now easy to verify that the hypotheses of the claim hold when we take $C = [O \times E']$.  Indeed, for either $X$, we have
\[
T_X^*(D) - D = [\Gamma(-\varphi + (0, 0))] - [E \times (0, 0)] - [\Gamma(-\varphi)] + [E \times O'].
\]
Then $C$ appears with order zero.  For any $g$ with $\mdiv(g) = T_X^*(D) - D$, we can compute $\mdiv_C(g|_C)$ by intersecting each irreducible divisor with $C$, yielding
\[
(T_X^*(D) - D)|_C = [(O, (0, 0))] - [(O, (0, 0))] - [(O, O')] + [(O, O')] = 0.
\]
Hence $Y = (O, O') \in C$ appears with coefficient a multiple of four.
\end{proof}

\begin{lemma}
\label{quartic_twist_quadratic_forms}
Let $v$ be a place of $K$, and let $d \in K_v^*/(K_v^*)^4$.  Then the natural isomorphism
\[
H^1(G_{K_v}, A_d[\lambda]) \xra{\sim} H^1(G_{K_v}, A[\lambda])
\]
identifies the canonical quadratic forms constructed in \cite[\S 4]{poonen_rains}.  Hence $\lambda$ satisfies condition (\ref{yu_assumption}).
\end{lemma}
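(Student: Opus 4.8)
The plan is to reduce condition (\ref{yu_assumption}) to a statement about the Heisenberg groups $\mathcal{H}_{A_d}(D)$ and $\mathcal{H}_A(D)$, and then to use Lemma \ref{heisenberg_group_fixed_by_i} to show these are compatibly isomorphic. Recall that the canonical quadratic form of Poonen--Rains on $H^1(G_{K_v}, A[\lambda])$ is built functorially out of the short exact sequence $1 \to \bar{K}_v^* \to \mathcal{H}_A(D) \to A[\lambda] \to 1$ of $G_{K_v}$-groups (a central extension encoding the Heisenberg group), together with its interaction with the Weil pairing. So the first step is to observe that the twisting data defines a canonical isomorphism $\psi \colon A_d \xrightarrow{\sim} A$ over a suitable extension, and that $\psi^* D$ differs from the divisor $D$ on $A_d$ (defined by the analogous formula with $\varphi$ replaced by the isogeny $E_d \to E'_d$) by a principal divisor; this is essentially the content of the computations $\phi_D = \lambda$ carried out for general $d$ in the paragraphs preceding Lemma \ref{heisenberg_group_fixed_by_i}.

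Second, I would use $\psi$ to transport $\mathcal{H}_{A_d}(D)$ into $\mathcal{H}_A(D)$: pullback by $\psi$ gives a group isomorphism sending $(X, g) \mapsto (\psi(X), g \circ \psi^{-1})$ (up to adjusting $g$ by the principal function witnessing $\psi^* D \sim D$), and it is $G_{K_v}$-equivariant provided the $1$-cocycle $\chi$ corresponding to $d$ acts trivially on the relevant functions. The cocycle $\chi(\sigma) = (\sigma.\psi)\circ\psi^{-1}$ takes values in $H \cong \mu_4$ acting as $[\zeta_4]$, and $\psi|_{A_d[\lambda]}$ is already defined over $K$ (as noted in \S\ref{twisting}), so the only obstruction to $G_{K_v}$-equivariance of the induced map on Heisenberg groups is exactly whether $g \circ [\zeta_4] = g$ for every $(X,g) \in \mathcal{H}_A(D)$ — and this is precisely Lemma \ref{heisenberg_group_fixed_by_i}. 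Granting that lemma, the extension $1 \to \bar{K}_v^* \to \mathcal{H}_{A_d}(D) \to A_d[\lambda] \to 1$ is carried $G_{K_v}$-equivariantly onto $1 \to \bar{K}_v^* \to \mathcal{H}_A(D) \to A[\lambda] \to 1$, compatibly with the identification $\psi|_{A_d[\lambda]}$ on the right and the identity on $\bar{K}_v^*$ on the left.

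Third, since the Poonen--Rains quadratic form $q_v$ is constructed purely from this extension of $G_{K_v}$-groups (via the connecting/obstruction maps in Galois cohomology, as in \cite[\S 4]{poonen_rains}), functoriality of that construction immediately yields that the natural isomorphism $H^1(G_{K_v}, A_d[\lambda]) \xrightarrow{\sim} H^1(G_{K_v}, A[\lambda])$ carries $q_v^{A_d}$ to $q_v^{A}$. This holds for every place $v$ and every $d$, which is exactly condition (\ref{yu_assumption}). I should also double-check the edge cases where $\psi$ is only defined over a ramified extension of $K_v$ — but since $\psi|_{A_d[\lambda]}$ and the induced Heisenberg isomorphism descend to $G_{K_v}$-equivariant maps by the argument above, no extra hypothesis on $v$ is needed.

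The main obstacle is the bookkeeping in the second step: making precise the sense in which $\psi$ induces a $G_{K_v}$-equivariant isomorphism of Heisenberg groups, including the choice of the function trivializing $\psi^* D - D$ on $A_d$ and checking that the group law (which involves the translation pullbacks $T_{X'}^*$) is respected. The conceptual content, however, is entirely localized in Lemma \ref{heisenberg_group_fixed_by_i}; everything else is functoriality of the Poonen--Rains construction together with the fact that $[\zeta_4]$ fixes the divisor $D$ (because $-\varphi$ intertwines the $\mu_4$-actions on $E$ and $E'$) and acts trivially on $A[\lambda]$.
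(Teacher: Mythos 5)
Your proposal is correct and follows essentially the same route as the paper: you construct the isomorphism of Heisenberg extensions induced by $\psi$, observe that its $G_{K_v}$-equivariance reduces to the facts that $H$ fixes $A[\lambda]$ and that $g \circ [\zeta_4] = g$ (Lemma \ref{heisenberg_group_fixed_by_i}), and then conclude via the resulting isomorphism of long exact sequences, since the Poonen--Rains quadratic form is the connecting map $H^1(G_{K_v}, A[\lambda]) \to H^2(G_{K_v}, \bar{K}_v^\times)$. The paper states this more briefly by citing the analogous argument in Klagsbrun--Mazur--Rubin and drawing the commutative diagram of short exact sequences, but the content is the same.
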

\begin{proof}
This follows by the same argument as for \cite[Lemma 5.2(ii)]{kbr}.  Indeed, we have a commutative diagram
\[
\begindc{\commdiag}[80]
\obj(0, 5)[one]{0}
\obj(5, 5)[a]{$\bar{K}_v^\times$}
\obj(11, 5)[b]{$\mathcal{H}_A(D)$}
\obj(17, 5)[c]{$A[\lambda]$}
\obj(22, 5)[one2]{0}
\mor{one}{a}{}
\mor{a}{b}{}
\mor{b}{c}{}
\mor{c}{one2}{}

\obj(0,0)[oneV]{0}
\obj(5, 0)[aV]{$\bar{K}_v^\times$}
\obj(11, 0)[bV]{$\mathcal{H}_{A_d}(D_d)$}
\obj(17, 0)[cV]{$A_d[\lambda]$}
\obj(22, 0)[one2V]{0}
\mor{oneV}{aV}{}
\mor{aV}{bV}{}
\mor{bV}{cV}{}
\mor{cV}{one2V}{}

\mor{aV}{a}{}[\atright, \equalline]
\mor{bV}{b}{$\sim$}
\mor{cV}{c}{$\sim$}
\enddc
\]
By the fact that $H$ fixes $A[\lambda]$ and the previous lemma, these isomorphisms are all fixed by $G_{K_v}$.  Hence they give an isomorphism of the corresponding long exact sequences from Galois cohomology.  But the canonical quadratic forms are defined to be the connecting morphisms $H^1(G_{K_v}, A[\lambda]) \ra H^2(G_{K_v}, \bar{K}_v^\times)$ and $H^1(G_{K_v}, A_d[\lambda]) \ra H^2(G_{K_v}, \bar{K}_v^\times)$ in these long exact sequences, so they are identified by the natural isomorphism.
\end{proof}

Now Theorem \ref{main_bg} says that we will be done once we compute the local invariants $\delta_v(A, d)$ when $v \notin \Sigma$ and $\ord_v(d) \equiv 2 \pmod{4}$.  This is accomplished in the following lemma.

\begin{lemma}
\label{half_ramified}
Assume $v \nmid 2\infty$, $A$ has good reduction at $v$, and $\ord_v(d) \equiv 2 \pmod{4}$.  Then $\delta_v(A, d) \equiv 0 \pmod{2}$ iff $\legendre{-1}{v} = 1$.
\end{lemma}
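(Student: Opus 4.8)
The plan is to use the product structure $A=E\times E'$ to reduce $\delta_v(A,d)$ to two $2$-isogeny local computations and to read off the answer from $2$-descent together with the reduction types of $E_d,E'_d$. Since $A[\lambda]=E[\varphi]\times E'[\hat\varphi]$ and $E[\varphi]\cong E'[\hat\varphi]\cong\mathbb Z/2$ with trivial Galois action, $H^1(G_{K_v},A[\lambda])\cong H^1(G_{K_v},E[\varphi])\oplus H^1(G_{K_v},E'[\hat\varphi])$, each summand being $K_v^*/(K_v^*)^2$. As $H$ acts diagonally, the natural isomorphism $A_d[\lambda]\cong A[\lambda]$ is the identity on each factor and respects this decomposition, and $H^1_{\mc S_{1_K}}$, $H^1_{\mc S_d}$ become $P_1\oplus P_2$, $P_1'\oplus P_2'$, where $P_1,P_1'$ are the images of the $\varphi$-descent of $E'(K_v)$, $E'_d(K_v)$ and $P_2,P_2'$ those of the $\hat\varphi$-descent of $E(K_v)$, $E_d(K_v)$. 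Hence $\delta_v(A,d)=\dim_{\F_2}(P_1/(P_1\cap P_1'))+\dim_{\F_2}(P_2/(P_2\cap P_2'))$. Because $E,E'$ have good reduction at $v\nmid 2$, the images of descent there are the unramified subgroup, so $P_1=P_2=H^1_\mur(G_{K_v},\mathbb Z/2)$, the line of unit classes in $K_v^*/(K_v^*)^2$ spanned by the class $u_0$ of a nonsquare unit; thus each summand of $\delta_v$ lies in $\{0,1\}$ and vanishes exactly when the relevant twisted image $P_i'$ contains $u_0$.

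After replacing $d$ by $\pi_v^2u$ ($u\in\mc O_v^*$) modulo fourth powers, the $2$-isogeny descent formulas for $y^2=x(x^2+c)$-type curves send the rational point $(0,0)$ of $E'_d$ to $-4d\equiv-d$ and that of $E_d$ to $16d\equiv d$ in $K_v^*/(K_v^*)^2$; these classes have even valuation, so if $-d\notin(K_v^*)^2$ then $u_0\in P_1'$ and the first summand is $0$, and likewise if $d\notin(K_v^*)^2$ the second summand is $0$.

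It remains to show $-d\in(K_v^*)^2$ forces the first summand to equal $1$, and symmetrically for $d$. Here $E_d,E'_d$ have $j$-invariant $1728$, additive reduction of type $I_0^*$, and acquire good reduction over the (tamely) ramified quadratic extension $K_v(\pi_v^{1/2})$ (substitute $x=\pi_v X$, $y=\pi_v^{3/2}Y$); their Néron component groups are isomorphic, as $\Gal(\bar k_v/k_v)$-modules and compatibly with $\varphi$, to $E_d[2]$ and $E'_d[2]$, with $\bar\varphi$ corresponding to $\varphi|_{E_d[2]}$ (kernel $E_d[\varphi]$, image $E'_d[\hat\varphi]$). Since $\varphi$ induces an isomorphism on the identity components of the Néron models ($v\nmid 2$, and $\ker\varphi$ reduces off the identity component), $E'_d(K_v)/\varphi E_d(K_v)\cong\Phi(E'_d)(k_v)/\bar\varphi(\Phi(E_d)(k_v))$.

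If $-d\in(K_v^*)^2$, then $E_d[2]\subset E_d(K_v)$, so $\Phi(E_d)(k_v)$ is the full component group and $\bar\varphi(\Phi(E_d)(k_v))=E'_d[\hat\varphi]$ has order $2$; consequently $P_1'$ is either $0$ (when $d\notin(K_v^*)^2$, so $\Phi(E'_d)(k_v)$ has order $2$) or the one-dimensional space spanned by the descent image of the $2$-torsion point $(2\sqrt d,0)$ (when $d\in(K_v^*)^2$), whose $x$-coordinate has odd valuation and is therefore a ramified class. In either subcase $u_0\notin P_1'$, so the first summand of $\delta_v$ is $1$; the symmetric argument handles the second. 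Thus $\delta_v(A,d)$ equals the number of $d,-d$ that lie in $(K_v^*)^2$; since $d$ and $-d$ differ by $-1$, this number is even precisely when $-1\in(K_v^*)^2$, i.e.\ when $\legendre{-1}{v}=1$. The step I expect to be the main obstacle is identifying the twisted descent images $P_1',P_2'$ precisely enough to decide whether they contain $u_0$, which forces the analysis of $E_d(K_v),E'_d(K_v)$ via their Néron component groups and their reduction over $K_v(\pi_v^{1/2})$.
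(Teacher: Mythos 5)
Your proof is correct, and it takes a genuinely different route from the paper's. The paper starts by invoking a semi-additivity property of local invariants ([Corollary 2.5] of Klagsbrun--Mazur--Rubin) to replace $\delta_v(A,d)$ with $\delta_v(A_c,\pi_v^2)$ for a unit $c$, then compares the $\lambda$-Selmer local condition at the quadratic twist by $\pi_v^2$ with the multiplication-by-$2$ local condition via an auxiliary surjection $\iota'$ whose kernel it computes using Lemma~\ref{totally_ramified} applied to the quadratic twist by $\pi_v$; the upshot is $\delta_v(A,d)\equiv\dim_{\F_2}A_c(K_v)[2]\pmod 2$, which is then evaluated from the explicit $2$-torsion of $E_c$ and $E'_c$. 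You instead split $\delta_v(A,d)$ along the product $A=E\times E'$ into two independent $2$-isogeny local invariants and evaluate each one directly: the untwisted images are $H^1_\mur$ by good reduction, and the twisted images $P_1',P_2'$ are nailed down by the explicit $\varphi$- and $\hat\varphi$-descent maps (the class of $(0,0)$ is $-4d$ resp.\ $d$ modulo squares) together with the $I_0^*$ reduction and Néron component groups of $E_d,E'_d$. The case-by-case count ($\delta_v$ equals the number of $\pm d$ that are squares) and the paper's formula ($\delta_v\equiv\dim E_c(K_v)[2]+\dim E'_c(K_v)[2]$) agree, and both reduce to $\legendre{-1}{v}$.

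What each approach buys: the paper's argument stays inside the abstract metabolic/Selmer-structure framework and never needs to know the reduction type or component group of the twisted curve, but it leans on two nontrivial imported facts (the additivity of $\delta_v$ and the $\iota'$ kernel computation). Your argument is more self-contained and concrete, and it exposes the product structure cleanly, but it does require justifying several geometric facts that you state somewhat casually: that $E_d(K_v)[2^\infty]$ injects onto $\Phi(E_d)(k_v)\subset(\mb Z/2)^2$ (which follows since $\tilde E_d^0\cong\mb G_a$ has no $2$-torsion and the formal group is uniquely $2$-divisible for $v\nmid 2$), that $\Phi(E_d)(\bar k_v)\cong E_d[2]$ as Galois modules for type $I_0^*$, and that $\varphi$ gives an isomorphism on identity components of the Néron models because $(0,0)$ lands off the identity component. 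These are all standard for $v\nmid 2$, but a complete writeup should either prove them or cite them precisely. With that caveat, your proof is sound and arguably more illuminating about where the $\legendre{-1}{v}$ comes from.
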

\begin{proof}
Let $c \in \Or_v^*$ be such that $c\pi_v^2 \equiv d \pmod{(K_v^*)^4}$.  Using \cite[Corollary 2.5]{kbr} and noting that we can define $\delta_v(A_c, \pi_v^2)$ in a natural way, we have
\[
\delta_v(A, c\pi_v^2) \equiv \delta_v(A, c) + \delta_v(A_c, \pi_v^2) \pmod{2}.
\]
By Lemma \ref{unramified}, $\delta_v(A, c) = 0$, so $\delta_v(A, d) \equiv \delta_v(A_c, \pi_v^2) \pmod{2}$.

For any $a \in K_v^*$, let $H^1_{\mathcal{S}_a}(G_{K_v}, A_c[2]) \subset H^1(G_{K_v}, A_c[2])$ denote the $v$-part of the Selmer structure corresponding to the multiplication-by-2 isogeny on the \textit{quadratic} twist of $A_c$ by $a$.  Then we claim that
\begin{equation}\label{half_ramified_sub_claim}
\delta_v(A_c, \pi_v^2) \equiv \dim_{\F_2}(H^1_{\mathcal{S}_{1_K}}(G_{K_v}, A_c[2])) \pmod{2}.
\end{equation}

To see this, consider the map
\[
\iota': H^1_{\mathcal{S}_{1_K}}(G_{K_v}, A_c[2]) \rightarrow \frac{H^1_{\mathcal{S}_{1_K}}(G_{K_v}, A_c[\lambda])}{H^1_{\mathcal{S}_{1_K}}(G_{K_v}, A_c[\lambda]) \cap H^1_{\mathcal{S}_{\pi_v^2}}(G_{K_v}, A_c[\lambda])}
\]
induced by the identity map
\[
\iota: A_c(K_v)/2A_c(K_v) \rightarrow A_c(K_v)/\lambda(A_c(K_v)).
\]
This map is surjective and has kernel
\[
\ker(\iota') = \left( H^1_{\mathcal{S}_{1_K}}(G_{K_v}, A_c[2]) \cap H^1_{\mathcal{S}_{\pi_v}}(G_{K_v}, A_c[2]) \right) + i_c(\ker(\iota)),
\]
where $i_c$ is the Kummer map.  But
\[
H^1_{\mathcal{S}_{1_K}}(G_{K_v}, A_c[2]) \cap H^1_{\mathcal{S}_{\pi_v}}(G_{K_v}, A_c[2]) = 0
\]
by Lemma \ref{totally_ramified} applied to the quadratic twist of $A_c$ by $\pi_v$.  Hence
\[
\delta_v(A_c, \pi_v^2) = \dim_{\F_2}(H^1_{\mathcal{S}_{1_K}}(G_{K_v}, A_c[2])) - \dim_{\F_2}(\ker(\iota)).
\]
Finally,
\[
\ker(\iota) = \frac{\lambda(A_c(K_v))}{2A_c(K_v)} \cong \frac{A_c(K_v)}{\lambda(A_c(K_v))}
\]
has dimension $\dim_{\F_2}(A_c(K_v)[\lambda]) = 2$ by Lemma \ref{size_and_ramified}(1).  This proves (\ref{half_ramified_sub_claim}).

To finish the proof, note that
\[
\dim_{\F_2}(H^1_{\mathcal{S}_{1_K}}(G_{K_v}, A_c[2])) = \dim_{\F_2}(A_c[2]) = \dim_{\F_2}(E_c[2]) + \dim_{\F_2}(E'_c[2])
\]
by Lemma \ref{size_and_ramified}(1) applied to the multiplication-by-2 isogeny.  Then easily
\begin{align*}
&\dim_{\F_2}(E_c[2]) + \dim_{\F_2}(E'_c[2]) \equiv 0 \pmod{2} \\
\iff &\legendre{-c}{v}\legendre{4c}{v} = 1 \\
\iff &\legendre{-1}{v} = 1.
\end{align*}
\end{proof}

Finally, we get our main result for this section.

% \begin{thm}
% \label{main_thm}
% Let $c, d \in K^*$ be such that for all places $v | 2\infty$, $c \equiv d \pmod{(K_v^*)^4}$.  Then
% \begin{align*}
% &d_\lambda(A_c/K) \equiv d_\lambda(A_d/K) \pmod{2} \\
% \iff &\legendre{-1}{S(c)} = \legendre{-1}{S(d)},
% \end{align*}
% where $S(a) \subset \Or_K$ is defined to be the product of all places $v \nmid 2\infty$ of $K$ such that $\ord_v(a) \equiv 2 \pmod{4}$.  Assuming the Shafarevich-Tate conjecture, the same holds for $\rk(E_c/K) \equiv \rk(E_d/K) \pmod{2}$.
% \end{thm}

\begin{thm}
\label{main_thm}
Let $d \in K^*$.  Then
\begin{align*}
&d_\lambda(A_d/K) - d_\lambda(A/K) \equiv \sum_{v | 2\infty} \delta_v(A, d) + \frac{1 - \legendre{-1}{S(d)}}{2} \pmod{2},
\end{align*}
where $S(d) \subset \Or_K$ is defined to be the product of all places $v \nmid 2\infty$ of $K$ such that $\ord_v(d) \equiv 2 \pmod{4}$.  Assuming the Shafarevich-Tate conjecture, the same holds for $\rk(E_d/K) - \rk(E/K) \pmod{2}$.
\end{thm}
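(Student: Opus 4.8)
The plan is to apply Theorem~\ref{main_bg} to $A = E \times E'$ and the self-dual degree-two isogeny $\lambda$, with $p = 2$, $n = 2$, and $H$ the diagonal copy of $\mu_4$ in $\Aut(E) \times \Aut(E')$. All the hypotheses have been checked in the preceding discussion: $\lambda$ is self-dual of prime degree $2$; condition~(\ref{poonen_rains_assumption}) holds because $\lambda = \phi_D$ for the symmetric divisor $D$; condition~(\ref{yu_assumption}) holds by Lemmas~\ref{heisenberg_group_fixed_by_i} and~\ref{quartic_twist_quadratic_forms}; and $H \cong \mu_4$ acts trivially on $A[\lambda]$, commutes with $\lambda$, and satisfies $\ker(1-\eta) = \ker(\lambda)$ for any generator $\eta$. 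I would take $\Sigma$ to be the set of places dividing $2\infty$, which is permissible since $A$ has good reduction away from $2$. Theorem~\ref{main_bg} then writes $d_\lambda(A_d/K) - d_\lambda(A/K)$ mod $2$ as a sum of three pieces: $\sum_{v \mid 2\infty} \delta_v(A,d)$, plus $\sum \dim_{\F_2}(A(K_v)[\lambda])$ over $v \nmid 2\infty$ with $\ord_v(d)$ odd, plus $\sum \delta_v(A,d)$ over $v \nmid 2\infty$ with $\ord_v(d) \equiv 2 \pmod 4$.

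The first piece already appears on the right-hand side of the claimed formula, so it remains to simplify the other two. For the second, I would use the explicit description $A[\lambda] = \{O,(0,0)\} \times \{O',(0,0)\}$, which is defined over $K$; hence $A(K_v)[\lambda]$ has $\F_2$-dimension $2$ for every place $v$, so that sum vanishes modulo $2$. For the third, the places appearing are precisely those dividing $S(d)$, and Lemma~\ref{half_ramified} says $\delta_v(A,d) \equiv 0 \pmod 2$ exactly when $\legendre{-1}{v} = 1$; thus that sum is congruent mod $2$ to the number of $v \mid S(d)$ with $\legendre{-1}{v} = -1$.

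To finish the congruence I would observe that $\legendre{-1}{S(d)}$, being the product of the local symbols $\legendre{-1}{v}$ over $v \mid S(d)$, equals $(-1)$ to the power of that same count, so $\tfrac{1 - \legendre{-1}{S(d)}}{2}$ is $0$ or $1$ according to the parity of the count and hence congruent to it mod $2$. Combining the three pieces gives the stated formula for $d_\lambda(A_d/K) - d_\lambda(A/K)$. For the final sentence, applying Lemma~\ref{tau_to_2} with twisting parameters $d$ and $1$ gives, under the Shafarevich--Tate conjecture, $d_\lambda(A_d/K) \equiv \rk(E_d/K)$ and $d_\lambda(A/K) \equiv \rk(E/K) \pmod 2$, so the same congruence holds for $\rk(E_d/K) - \rk(E/K)$.

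I do not expect a genuine obstacle: the result is an assembly of Theorem~\ref{main_bg}, Lemma~\ref{half_ramified}, and Lemma~\ref{tau_to_2}. The only point requiring care is bookkeeping --- confirming that the index sets in Theorem~\ref{main_bg} (valuation odd, versus valuation $\equiv 2 \bmod 4$) match the definition of $S(d)$, and that $\legendre{-1}{S(d)}$ is read as a product of local symbols so the parity translation is valid.
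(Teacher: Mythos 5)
Your proposal is correct and matches the approach the paper intends: the theorem is obtained by specializing Theorem~\ref{main_bg} (with $\Sigma = \{v : v \mid 2\infty\}$, $p=2$, $n=2$), killing the middle sum because $\dim_{\F_2}A(K_v)[\lambda]=2$ identically, rewriting the third sum via Lemma~\ref{half_ramified} and the identity $\legendre{-1}{S(d)}=\prod_{v\mid S(d)}\legendre{-1}{v}$, and then invoking Lemma~\ref{tau_to_2} for the Shafarevich--Tate statement. The paper presents the result without a written proof precisely because it is this direct assembly of the preceding lemmas.
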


\begin{rem}
Note that each of the local invariants $\delta_v(A, d)$ depends only on the value of $d \pmod{(K_v^*)^4}$: indeed, if $c \equiv d \pmod{(K_v^*)^4}$, then $A_c \cong A_d$ over $K_v$.  Hence the above theorem and the statements below allow us to classify $d_\lambda(A_d/K) \pmod{2}$ for all $d \in K^*$ after performing a finite amount of computation, namely, that required to compute $\delta_v(A, d)$ for all $v | 2\infty$, $d \in K_v^* / (K_v^*)^4$.
\end{rem}

Obviously if $v$ is real, then $c \equiv d \pmod{(K_v^*)^4}$ iff $c$ and $d$ have the same sign at $v$.  For places $v | 2$, the following fact is useful.

\begin{prop}\label{mod_4_prop}
Let $v | 2$ be a place of $K$, and let $c, d \in K_v^*$.  Define the integer $m_v$ by $m_v := 3e_{v/2} + 1$, where $e_{v/2}$ is the ramification index of $v$ over 2.  Suppose that
\begin{itemize}
  \item $\ord_v(c) \equiv \ord_v(d) \pmod{4}$
  \item $c/(\pi_v^{\ord_v(c)})$ and $d/(\pi_v^{\ord_v(d)})$ have the same residues in $(\Or_v/v^{m_v})^*/((\Or_v/v^{m_v})^*)^4$.
\end{itemize}
Then $c \equiv d \pmod{(K_v^*)^4}$.
\end{prop}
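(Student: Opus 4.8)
The plan is to reduce the claim $c \equiv d \pmod{(K_v^*)^4}$ to two separate assertions: one about the valuations, and one about the units. First I would write $c = \pi_v^{\ord_v(c)} u$ and $d = \pi_v^{\ord_v(d)} w$ with $u, w \in \Or_v^*$. Since $\ord_v(c) \equiv \ord_v(d) \pmod 4$, the quotient $\pi_v^{\ord_v(c) - \ord_v(d)}$ is a fourth power in $K_v^*$, so it suffices to prove that $u \equiv w \pmod{(\Or_v^*)^4}$, i.e., that two units of $\Or_v$ which are congruent modulo $v^{m_v}$ differ by a fourth power of a unit. So the whole proposition comes down to the statement that $1 + v^{m_v} \subset (\Or_v^*)^4$, where $m_v = 3e_{v/2} + 1$.

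The key step is therefore the following local computation at $v \mid 2$: for $t \in \Or_v$ with $\ord_v(t) \ge m_v = 3e_{v/2} + 1$, the element $1 + t$ is a fourth power in $\Or_v^*$. I would prove this by a successive-approximation (Hensel-type) argument applied to the polynomial $f(X) = X^4 - (1+t)$. Starting from the approximate root $x_0 = 1$, one has $f(x_0) = -t$ with $\ord_v(f(x_0)) \ge 3e_{v/2} + 1$, while $f'(x_0) = 4x_0^3 = 4$ has $\ord_v(f'(x_0)) = 2e_{v/2}$. The strong form of Hensel's lemma requires $\ord_v(f(x_0)) > 2\,\ord_v(f'(x_0))$, i.e., $3e_{v/2} + 1 > 4 e_{v/2}$, which fails in general — so a direct appeal to Hensel is not quite enough and one must be slightly more careful. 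The fix is to note that the derivative obstruction is really governed by $\ord_v(4) = 2e_{v/2}$ but the iteration for $X^4 - (1+t)$ can be organized by hand: writing $1 + t = (1+s)^4 = 1 + 4s + 6s^2 + 4s^3 + s^4$, we must solve $4s + 6s^2 + 4s^3 + s^4 = t$; since we expect $\ord_v(s) = \ord_v(t) - \ord_v(4) = \ord_v(t) - 2e_{v/2} \ge e_{v/2} + 1$, the terms $6s^2$, $4s^3$, $s^4$ all have valuation strictly larger than $\ord_v(4s) = \ord_v(t)$ (using $2(e_{v/2}+1) > e_{v/2}+1$ for $s^2$, etc.), so the equation can be solved iteratively in $\Or_v$, each stage reducing $\ord_v$ of the remaining error. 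This is the standard fact that the map $x \mapsto x^4$ is a bijection from $1 + v^{e_{v/2}+1}$ onto $1 + v^{3e_{v/2}+1}$ when $v \mid 2$; equivalently $\log$ and $\exp$ converge on the relevant subgroups and multiplication by $4$ shifts valuation by exactly $2e_{v/2}$.

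The main obstacle is precisely this numerology: unlike the quadratic case (where $m_v = 2e_{v/2}+1$ suffices because $\ord_v(2) = e_{v/2}$), for fourth powers the derivative of $X^4$ contributes $\ord_v(4) = 2e_{v/2}$, so one needs to track that the cross-terms $6s^2 + 4s^3 + s^4$ do not spoil the iteration, and to verify that $3e_{v/2}+1$ is the exact threshold (not merely sufficient by a wasteful bound). Once $1 + v^{m_v} \subset (\Or_v^*)^4$ is established, the proposition follows immediately: the two hypotheses say exactly that $\ord_v(c) \equiv \ord_v(d) \pmod 4$ and that the unit parts agree in $(\Or_v/v^{m_v})^* / ((\Or_v/v^{m_v})^*)^4$, and the latter lifts to agreement in $\Or_v^* / (\Or_v^*)^4$ because the kernel of $\Or_v^* \twoheadrightarrow (\Or_v/v^{m_v})^*$ is $1 + v^{m_v} \subseteq (\Or_v^*)^4$. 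Combining with the fourth-power ambiguity in the uniformizer exponent gives $c \equiv d \pmod{(K_v^*)^4}$, as desired.
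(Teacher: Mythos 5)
Your reduction coincides with the paper's: both arguments boil down to showing $1 + v^{m_v} \subseteq (\Or_v^*)^4$ with $m_v = 3e_{v/2} + 1$, after which one lifts a fourth-power representative of $c/d$ from $(\Or_v/v^{m_v})^*$ to conclude $c/d \in (K_v^*)^4$. Where you diverge is in the proof of this containment. The paper expands $(1 + \beta\pi_v^n)^{1/4}$ as a $p$-adic binomial series and establishes convergence by bounding $\ord_v\binom{1/4}{i} \ge -(3i-1)e_{v/2}$ via Legendre's formula, so the $i$-th term has valuation at least $in - (3i-1)e_{v/2}$, which tends to infinity exactly when $n \ge 3e_{v/2}+1$. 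You instead run a Hensel/Newton-type iteration for $X^4 - (1+t)$, or equivalently the $\log/\exp$ picture: for $k > e_{v/2}$, $\log$ identifies $1 + v^k$ with $v^k$, under which the fourth-power map becomes multiplication by $4$, shifting valuation by exactly $2e_{v/2}$. You correctly observe that the off-the-shelf strong Hensel hypothesis $\ord_v(f(x_0)) > 2\ord_v(f'(x_0))$ just barely fails, and that one must therefore track the cross-terms $6s^2 + 4s^3 + s^4$ by hand to see they lie in strictly higher valuation than $4s$; your comparisons (with $\ord_v(s) \ge e_{v/2}+1$, checking $e_{v/2} + 2\ord_v(s)$, $2e_{v/2} + 3\ord_v(s)$, and $4\ord_v(s)$ against $2e_{v/2} + \ord_v(s)$) are exactly the right numerology, and they are what makes the iteration converge. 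Both routes land on the same sharp threshold; the binomial-series proof gets there in a single closed-form estimate, while your approach is more structural (exhibiting $x \mapsto x^4$ as a bijection $1 + v^{e_{v/2}+1} \to 1 + v^{3e_{v/2}+1}$), at the cost of a convergence bookkeeping step that your sketch gestures at but does not fully write out and that a final version would need to spell out.
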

\begin{proof}
Without loss of generality, $c$ and $d$ are units in $\mc{O}_v$ which have the same residues in $(\Or_v/v^{m_v})^*/((\Or_v/v^{m_v})^*)^4$.  Then $c \equiv d \pmod{(K_v^*)^4}$ follows from a straightforward modification of \cite[Lemma 4.1 and Corollary 4.3]{quadratic_twists}, as follows.

First, we show that if $0 \neq \alpha \in \mc{O}_v$ is such that $\alpha \equiv 1 \pmod{v^{m_v}}$, then $\alpha \in (K_v^*)^4$.  Write $\alpha = 1 + \beta \pi_v^n$ with $\beta \in \mc{O}_v$ and $n \ge m_v$.  Consider the binomial expansion
\[
\alpha^{1/4} = (1+ \beta\pi_v^n)^{1/4} = \sum_{i=0}^\infty \binom{1/4}{i} \beta^i \pi_v^{in}.
\]
This series converges in $K_v$ so long as the valuations of the terms appearing in the right-hand series tend to infinity.  By Legendre's formula, we have
\begin{align*}
\ord_v\binom{1/4}{i}
&= \ord_v\left( \frac{1(1-4)(1-8) \cdots (1 - 4(i-1))}{4^i i!}\right) \\
&\ge -\left(2i + \frac{i-1}{2-1}\right) \ord_v(2) \\
&= -(3i - 1)e_{v/2}.
\end{align*}
Hence we want
\begin{align*}
\lim_{i\ra \infty} \left( in - (3i-1)e_{v/2} \right) \ra \infty,
\end{align*}
which happens precisely because $n \ge m_v = 3e_{v/2} + 1$.

Next, the image of $c/d$ in $(\Or_v/v^{m_v})^*$ is a fourth power, hence has some preimage $b\in\Or_v$ which is a fourth power in $\Or_v$.  Then $(c/d)/b \equiv 1 \pmod{v^{m_v}}$, so it is a fourth power in $\Or_v$ by the previous paragraph, hence so is $c/d$.
\end{proof}

\section{Sextic Twists of Elliptic Curves with $j$-Invariant 0}\label{sextic}
The sextic twist case begins analogously to the quartic twist case.  Fix a number field $K$.  Let $E/K$ be the elliptic curve with affine equation $y^2 = x^3 + 1$, and let $E'/K$ be the elliptic curve $y^2 = x^3 - 27$.  We wish to classify the conjectural rank parities of sextic twists of $E/K$, which have the form $E_d/K: y^2 = x^3 + d$ for $d \in K^*/(K^*)^6$.  Note that these are all of the elliptic curves over $K$ with $j$-invariant 0.

We have dual degree three isogenies (found using SAGE \cite{sage, pari})
\begin{align*}
\varphi: E_d &\rightarrow E'_d
&\hat{\varphi}: E'_d &\rightarrow E_d \\
(x, y) &\mapsto \left( \frac{x^3+4d}{x^2}, \frac{y(x^3-8d)}{x^3} \right)
&(x', y') &\mapsto \left( \frac{x'^3-108d}{9x'^2}, \frac{y'(x'^3+216d)}{27x'^3} \right).
%&(x', y') &\mapsto \left( \frac{(1/9)x'^3-12d}{x'^2}, \frac{y'((1/27)x'^3+8d)}{x'^3} \right)
\end{align*}
From these, we can construct a self-dual degree three isogeny
\begin{align*}
\lambda: E_d \times E'_d &\rightarrow E_d \times E'_d \\
(P, Q) &\mapsto (\hat{\varphi}(Q), \varphi(P)).
\end{align*}
Set $A := E \times E'$, from which $A_d = E_d \times E'_d$.  Note that the kernel of $\lambda: A_d \ra A_d$ is
\[
A_d[\lambda] = E_d[\varphi] \times E'_d[\hat{\varphi}] = \{O, (0, \pm \sqrt{d})\} \times \{O', (0, \pm \sqrt{-27d})\}.
\]
Thus $\dim(A(K)[\lambda]) \equiv 0 \pmod{2}$ iff $\sqrt{-27d^2} \in K$, which is iff $\sqrt{-3} \in K$.

By the following result, for our problem, it suffices to study how the $\lambda$-Selmer rank of $A$ varies with $d$.

\begin{lemma}
\label{tau_to_3}
Assuming the Shafarevich-Tate conjecture,
\begin{align*}
&d_\lambda(A_d/K) \equiv \rk(E_d/K) \pmod{2} \\
&\iff \mu_3 \subset K.
\end{align*}
\end{lemma}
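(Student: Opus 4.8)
The plan is to run the argument of Lemma~\ref{tau_to_2} with $2$ replaced by $3$, and then to track exactly where the condition $\mu_3\subset K$ intervenes. As in the quartic case, the decomposition $A_d=E_d\times E'_d$ together with the shape of $\lambda$ gives $\msel^{(\lambda)}(A_d/K)\cong\msel^{(\varphi)}(E_d/K)\times\msel^{(\hat\varphi)}(E'_d/K)$, so $d_\lambda(A_d)=d_\varphi(E_d)+d_{\hat\varphi}(E'_d)$. I would then feed in the standard exact sequence
\begin{align*}
0\ra &\frac{E'_d(K)[\hat\varphi]}{\varphi(E_d(K)[3])}\ra\msel^{(\varphi)}(E_d/K)\ra\msel^{(3)}(E_d/K)\\ &\ra\msel^{(\hat\varphi)}(E'_d/K)\ra\frac{\Sha(E'_d/K)[\hat\varphi]}{\varphi(\Sha(E_d/K)[3])}\ra 0,
\end{align*}
together with the identities $d_3(E_d)=\rk(E_d)+\dim_{\F_3}E_d(K)[3]+\dim_{\F_3}\Sha(E_d/K)[3]$ and $\dim_{\F_3}\varphi(E_d(K)[3])=\dim_{\F_3}E_d(K)[3]-\dim_{\F_3}E_d(K)[\varphi]$. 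The two copies of $\dim_{\F_3}E_d(K)[3]$ cancel, leaving
\begin{align*}
d_\lambda(A_d)=\rk(E_d)&+\dim_{\F_3}E_d(K)[\varphi]+\dim_{\F_3}E'_d(K)[\hat\varphi]\\ &+\dim_{\F_3}\Sha(E_d/K)[3]+\dim_{\F_3}\frac{\Sha(E'_d/K)[\hat\varphi]}{\varphi(\Sha(E_d/K)[3])}.
\end{align*}

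Next I would discard the two $\Sha$-terms modulo $2$; this is the step that uses the Shafarevich--Tate conjecture. Assuming $\Sha$ is finite, Cassels's theorem that the Cassels--Tate pairing on the $\Sha$ of an elliptic curve is alternating makes $\dim_{\F_3}\Sha(E_d/K)[3]$ even, and the properties of the Cassels pairing invoked for the degree-$2$ isogeny in the proof of Lemma~\ref{tau_to_2} make $\dim_{\F_3}\frac{\Sha(E'_d/K)[\hat\varphi]}{\varphi(\Sha(E_d/K)[3])}$ even as well. (Concretely, the latter amounts to $\dim_{\F_3}\Sha(E_d/K)[\varphi]\equiv\dim_{\F_3}\Sha(E'_d/K)[\hat\varphi]\pmod 2$, which I would deduce from the functoriality of the Cassels--Tate pairing under $\varphi$ --- so that $\hat\varphi$ is adjoint to $\varphi$ on the $3$-primary parts of $\Sha$ --- together with the fact that $|\Sha(E_d/K)[3^\infty]|$ and $|\Sha(E'_d/K)[3^\infty]|$ are perfect squares.) This leaves
\[
d_\lambda(A_d/K)\equiv\rk(E_d/K)+\dim_{\F_3}E_d(K)[\varphi]+\dim_{\F_3}E'_d(K)[\hat\varphi]\pmod 2.
\]

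Finally I would evaluate the correction term. By the explicit description of the kernels, $E_d[\varphi]=\{O,(0,\pm\sqrt d)\}$ is cyclic of order $3$ with $G_K$ acting through the quadratic character cutting out $K(\sqrt d)/K$, so $\dim_{\F_3}E_d(K)[\varphi]=1$ if $\sqrt d\in K$ and $0$ otherwise; similarly $E'_d[\hat\varphi]=\{O',(0,\pm\sqrt{-27d})\}$ gives $\dim_{\F_3}E'_d(K)[\hat\varphi]=1$ iff $\sqrt{-27d}\in K$, equivalently iff $-3d\in(K^*)^2$ (as $-27d$ and $-3d$ differ by the square $9$). If $\mu_3\subset K$, i.e.\ $\sqrt{-3}\in K$, then $\sqrt d\in K\iff\sqrt{-3d}\in K$, so the two dimensions coincide for every $d$, the correction is even, and $d_\lambda(A_d/K)\equiv\rk(E_d/K)\pmod 2$ for all $d$. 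If instead $\sqrt{-3}\notin K$, take $d=1$: then $\dim_{\F_3}E_1(K)[\varphi]=1$ while $\dim_{\F_3}E'_1(K)[\hat\varphi]=0$, the correction is odd, and the congruence fails. Since $\mu_3\subset K\iff\sqrt{-3}\in K$, this gives the claimed equivalence. I expect the only delicate point to be the middle paragraph --- verifying that for the degree-$3$ isogeny both $\Sha$-contributions are even modulo $2$ under the Shafarevich--Tate conjecture; the rest is a routine transcription of the quartic argument together with a short Galois-module computation on the three-element kernels.
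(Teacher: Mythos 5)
Your proposal follows the same route as the paper: re-run the argument of Lemma~\ref{tau_to_2} with $2$ replaced by $3$, using the isogeny exact sequence and the Cassels result that $\dim_{\F_3}\bigl(\Sha(E'_d/K)[\hat\varphi]/\varphi(\Sha(E_d/K)[3])\bigr)$ is even (\cite[Corollary to Theorem 1.2]{cassels_pairing_results} covers degree-$p$ isogenies, not just $p=2$, so there is no extra work hidden there), to land on
\[
d_\lambda(A_d/K)\equiv\rk(E_d/K)+\dim_{\F_3}E_d(K)[\varphi]+\dim_{\F_3}E'_d(K)[\hat\varphi]\pmod 2.
\]
The paper's own proof is just this, phrased as ``same argument as the quartic case'' together with the observation (made in the text immediately before the lemma) that the correction term is $\dim_{\F_3}A_d(K)[\lambda]\bmod 2$.

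Where you diverge usefully is in the last step. The paper asserts that $\dim_{\F_3}A_d(K)[\lambda]$ is even iff $\sqrt{-27d^2}\in K$, i.e.\ iff $\sqrt{-3}\in K$, and treats this as independent of $d$. As your analysis makes plain, for a general \emph{sextic} parameter $d$ this is not quite right: one has $\dim_{\F_3}E_d(K)[\varphi]=1$ iff $\sqrt d\in K$ and $\dim_{\F_3}E'_d(K)[\hat\varphi]=1$ iff $\sqrt{-3d}\in K$, and when neither is a square in $K$ the sum is $0$ (even) even if $\mu_3\not\subset K$ (e.g.\ $K=\Q$, $d=2$). The paper's clean criterion is correct precisely when $\sqrt d\in K$; this is automatic whenever $A_d$ genuinely arises as a $\mu_3$-twist as in Theorem~\ref{main_bg}, since the resulting sextic Weierstrass coefficient is a perfect square in $K$, and those are the only twists to which the lemma is applied (Proposition~\ref{cubic_twists}, the proof of Theorem~\ref{main_thm2}). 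You resolve the same ambiguity by exhibiting $d=1$ as a witness, which proves the ``for all $d$'' version of the statement; a slightly cleaner way to match the paper's intended per-$d$ reading would be to note that for any $\mu_3$-twist $d$, one automatically has $\dim_{\F_3}E_d(K)[\varphi]=1$, so the parity of the correction is governed by $\sqrt{-3}\in K$ alone, uniformly. With that small caveat, your proof is correct and is essentially the paper's argument made explicit.
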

\begin{proof}
By the same argument as in the quartic twist case, $d_\lambda(A_d/K) \equiv 0 \pmod{2}$ iff $\dim(A(K)[\lambda]) \equiv 0 \pmod{2}$, which is iff $\sqrt{-3} \in K$.  Since $\zeta_3 = \frac{1}{2}(-1\pm \sqrt{-3})$, this is iff $\mu_3 \subset K$.
\end{proof}

\begin{rem}
\label{rem_mu_3}
Suppose $\mu_3 \subset K$.  Then $\sqrt{-3} \in K$, so $E_d \cong E'_d$ over $K$, and the isomorphism identifies $\varphi$ with $\hat{\varphi}$.  Hence
\[
d_\lambda(A_d/K) = 2d_\varphi(E_d/K) \equiv 0 \pmod{2},
\]
so that conjecturally $\rk(E_d/K) \equiv 0 \pmod{2}$ for all values of $d$ by Lemma \ref{tau_to_3}.  Thus we are primarily interested in the case $\mu_3 \not\subset K$.
\end{rem}

To solve our problem for general $K$, we wish to apply the results of Section \ref{background} to $A/K$ and $\lambda$, with $p = 3$.  However, this will not give us a full solution --- indeed, the diagonal embedding of $\mu_6$ into $\mbox{Aut}(E)$ does not fix $A[\lambda]$; only the subgroup $\mu_3$ does, and there is no nontrivial isogeny whose kernel is fixed by $\mu_6$.  Hence our background results will only apply to the cubic twists $A_{d^2}$ of $A/K$, for $d \in K^*/(K^*)^3$.  Nonetheless, we will see in Theorem \ref{main_thm2} that we can use cubic twists to understand sextic twists as well.

For our subgroup $H$ of $\mbox{Aut}(A)$, we choose the subgroup $\mu_3$ of the diagonal embedding of $\mu_6$ into $\mbox{Aut}(E) \times \mbox{Aut}(E')$.  Thus $\zeta \in \mu_3$ acts on $A$ as
\[
[\zeta]((x, y), (x', y')) = ((\zeta^2x, y), (\zeta^2x', y')).
\]
By our description of $A[\lambda]$ given above, $H$ acts trivially on $A[\lambda]$, and $\ker(1 - [\zeta]) = \ker(\lambda)$ for any generator $\zeta$ of $H$.  Also, $H$ commutes with $\lambda$.

Now $A$ is self-dual since $E$ and $E'$ are.  Also, $p = 3$ is odd, so we do not need any extra geometric conditions on $\lambda$.

Hence Theorem \ref{main_bg} gives the following result for cubic twists of $A$.  Note that the finite places of bad reduction for $A/K$ are at most those dividing 6.

\begin{prop}
\label{cubic_twists}
Let $d \in K^*$.  Then
\begin{align*}
&d_\lambda(A_{d^2}/K) - d_\lambda(A/K) \equiv \sum_{v | 6\infty} \delta_v(A, d) + \frac{1 - \legendre{-3}{T(d)}}{2} \pmod{2},
\end{align*}
where $T(d) \subset \Or_K$ is defined to be the product of all places $v \nmid 6\infty$ of $K$ such that $3 \nmid \ord_v(d)$.  Assuming the Shafarevich-Tate conjecture, the same holds for $\rk(E_{d^2}/K) - \rk(E/K) \pmod{2}$.
\end{prop}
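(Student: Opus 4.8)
The plan is to apply Theorem~\ref{main_bg} to $A = E\times E'$, the self-dual degree-$3$ isogeny $\lambda$, and the diagonal copy $H = \mu_3 \subset \Aut(E)\times\Aut(E')$, with $p = 3$ and $n = 1$, and then to rewrite its right-hand side. First I would check the hypotheses: $A$ is principally polarized and $\lambda$ is self-dual of prime degree $p = 3$; since $p$ is odd, condition~(\ref{poonen_rains_assumption}) is automatic and condition~(\ref{yu_assumption}) is vacuous; and $H\cong\mu_3$ as a $G_K$-module, acts trivially on $A[\lambda]$, commutes with $\lambda$, and satisfies $\ker(1-[\zeta]) = \ker(\lambda)$ for a generator $[\zeta]$ --- all verified above. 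We may take $\Sigma = \{v : v \mid 6\infty\}$, since $E$ and $E'$ have bad reduction only at places above $2$ and $3$. Under the Kummer identification the cubic twist of $A$ attached to $d$ is $A_{d^2} = E_{d^2}\times E'_{d^2}$, so the left side of the formula in Theorem~\ref{main_bg} is $d_\lambda(A_{d^2}/K) - d_\lambda(A/K)$. The key structural point is that, because $n = 1$, the third sum in that formula runs over $v\notin\Sigma$ with $p\mid\ord_v(d)$ and $p^n\nmid\ord_v(d)$ --- contradictory conditions --- hence is empty, while the middle sum runs over $v\notin\Sigma$ with $p = 3\nmid\ord_v(d)$, which is exactly the condition $v\mid T(d)$. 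Thus Theorem~\ref{main_bg} yields
\[
d_\lambda(A_{d^2}/K) - d_\lambda(A/K) \equiv \sum_{v\mid 6\infty}\delta_v(A,d) + \sum_{v\mid T(d)}\dim_{\F_3}\!\big(A(K_v)[\lambda]\big) \pmod{2}.
\]

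Next I would evaluate the $T(d)$-sum. As a $G_K$-module, $A[\lambda] = E[\varphi]\times E'[\hat\varphi]$, where $E[\varphi] = \{O,(0,\pm 1)\}$ is $K$-rational while $G_K$ acts on $E'[\hat\varphi] = \{O',(0,\pm\sqrt{-27})\}$ through the quadratic character of $K(\sqrt{-3})/K$. Hence for every place $v$ one has $\dim_{\F_3}(A(K_v)[\lambda]) = 1 + \dim_{\F_3}(E'(K_v)[\hat\varphi])$, which equals $2$ if $\sqrt{-3}\in K_v$ and $1$ otherwise. For $v\nmid 6\infty$ the residue characteristic is odd and $-3$ is a unit at $v$, so Hensel's lemma gives $\sqrt{-3}\in K_v \iff \legendre{-3}{v} = 1$, whence $\dim_{\F_3}(A(K_v)[\lambda])\equiv\tfrac{1 - \legendre{-3}{v}}{2}\pmod 2$. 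Summing over the finitely many $v\mid T(d)$ and using multiplicativity of the symbol gives $\sum_{v\mid T(d)}\dim_{\F_3}(A(K_v)[\lambda])\equiv\tfrac{1 - \legendre{-3}{T(d)}}{2}\pmod 2$, and substituting into the display above proves the congruence for $d_\lambda$.

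For the statement about ranks I would argue exactly as in the proof of Lemma~\ref{tau_to_2}, but with $p = 3$ in place of $p = 2$: under the Shafarevich--Tate conjecture this gives, for every sextic twist $E_e$,
\[
d_\lambda(A_e/K) \equiv \rk(E_e/K) + \dim_{\F_3} E_e(K)[\varphi] + \dim_{\F_3} E'_e(K)[\hat\varphi] \pmod{2}.
\]
Applying this with $e = d^2$ and with $e = 1$: the kernel $E_{d^2}[\varphi] = \{O,(0,\pm d)\}$ is $K$-rational, and $E'_{d^2}[\hat\varphi] = \{O',(0,\pm d\sqrt{-27})\}$ is $K$-rational precisely when $\sqrt{-3}\in K$ --- exactly as for $E = E_1$ and $E' = E'_1$. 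So the two correction terms coincide and cancel on subtraction, giving the same congruence for $\rk(E_{d^2}/K) - \rk(E/K)$. I expect this last step to be the only part needing real care: unlike the quartic case (Lemma~\ref{tau_to_2}), $d_\lambda(A_e/K)$ is not congruent to $\rk(E_e/K)$ for a general sextic twist $e$ (cf.\ Lemma~\ref{tau_to_3}), so one must check that the discrepancy is identical for the two curves being compared --- which is exactly why the statement is phrased for the subfamily of twists $E_{d^2}$. Everything else is bookkeeping plus the elementary torsion computation above.
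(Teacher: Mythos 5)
Your proof is correct and follows the paper's (mostly implicit) approach: apply Theorem~\ref{main_bg} with $p=3$, $n=1$, observe the third sum is vacuous, convert $\dim_{\F_3}(A(K_v)[\lambda])$ to $\frac{1-\legendre{-3}{v}}{2}$ via the explicit description of $A[\lambda]$, and invoke the Lemma~\ref{tau_to_2}-style descent argument for the rank statement. Your explicit check that the discrepancy term $\dim_{\F_3}E_{d^2}(K)[\varphi]+\dim_{\F_3}E'_{d^2}(K)[\hat\varphi]$ is independent of $d$ (being $1+[\sqrt{-3}\in K]$, precisely because $d^2$ is a square) is a welcome sharpening of the paper's appeal to Lemma~\ref{tau_to_3}, whose conclusion actually only holds in general over the cubic-twist subfamily, which is exactly the case at hand.
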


To extend this result to all sextic twists, recall that if $B$ is an abelian variety and $B_d$ is the quadratic twist of $B$ by some nontrivial $d \in K^*/(K^*)^2$, then $\rk(B/K) + \rk(B_d/K) = \rk(B/K(\sqrt{d}))$.  Hence we can relate the rank of the sextic twist by $d$ over $K$ to that of the sextic twist by $d$ over $K(\sqrt{d})$, i.e., the cubic twist by $\sqrt{d}$ over $K(\sqrt{d})$, whose rank parity we in principle understand by the above proposition.  Using this idea, we get the following result for sextic twists.

\begin{thm}
\label{main_thm2}
Assume the Shafarevich-Tate conjecture.  Let $d \in K^*$.  Then there exist local invariants $\epsilon_v(d)$, defined for each place $v | 6\infty$ and depending only on the value of $d \pmod{(K_v^*)^6}$, such that
\begin{align*}
&\rk(E_d/K) - \rk(E/K) \equiv \sum_{v | 6\infty} \epsilon_v(d) + \frac{1 - \legendre{-3}{U(d)}}{2} \pmod{2},
\end{align*}
where $U(d) \subset \Or_K$ is defined to be the product of all places $v \nmid 6\infty$ of $K$ such that $\ord_v(d) \equiv 2, 4 \pmod{6}$.
\end{thm}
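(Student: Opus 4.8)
The plan is to reduce the sextic twist problem to the cubic twist problem already solved in Proposition \ref{cubic_twists}, via the standard quadratic-twist rank identity. Fix $d \in K^*$ and let $L := K(\sqrt{d})$. If $\sqrt{d} \in K$ then $E_d$ is already a cubic twist and we apply Proposition \ref{cubic_twists} directly; so assume $[L:K]=2$. The key identity is that for any elliptic curve $B/K$ and nontrivial $t \in K^*/(K^*)^2$ one has $\rk(B_t/K) \equiv \rk(B/L) - \rk(B/K) \pmod 2$ (in fact with equality over $\mb{Z}$, from $B(L)\otimes\Q \cong (B(K)\otimes\Q)\oplus(B_t(K)\otimes\Q)$). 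Apply this with $B = E_d$ and $t = d$: the sextic twist $E_d$ base-changed to $L$ becomes $E_d/L$, and over $L$ we may rescale by the square $d$, i.e.\ $E_d/L \cong E_{d\cdot d^{-1}}$-type manipulation shows $E_d/L$ is the cubic twist of $E/L$ by $\sqrt{d}$ — more precisely $E_d \cong (E/L)_{e}$ where $e^3$ differs from $d$ by a square in $L^*$ (take $e$ so that $e^2 \equiv \sqrt{d}^{?}$; one checks $d = (\sqrt d)^2$ so $E_d/L$ is the twist by $(\sqrt d)^2$, and since over $L$ the curve $y^2=x^3+d$ with $d$ a square times $(\sqrt d)$ is isomorphic to $y^2 = x^3 + \sqrt{d}$, a cubic twist of $E$ by $\sqrt d \in L^*/(L^*)^3$). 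Thus
\[
\rk(E_d/K) - \rk(E/K) \equiv \rk((E/L)_{\sqrt d \text{, cubic}}) - \rk(E/L) + \rk(E/L) - 2\rk(E/K) \equiv \rk(E_{(\sqrt d)^2}/L) - \rk(E/L) \pmod 2,
\]
and Proposition \ref{cubic_twists} (applied over the base field $L$, to $A/L$ and $\lambda$) expresses the right side as $\sum_{w \mid 6\infty}\delta_w(A/L, \sqrt d) + \frac{1-\legendre{-3}{T(\sqrt d)}}{2}$, where $w$ runs over places of $L$.

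Next I would organize the places of $L$ by the place of $K$ below them. For $w \mid 6\infty$ of $L$ lying over $v \mid 6\infty$ of $K$, the contribution $\delta_w(A/L,\sqrt d)$ depends only on $\sqrt d$ modulo $(L_w^*)^6$, hence only on $d$ modulo $(K_v^*)^6$ together with the splitting behaviour of $v$ in $L$ (which is itself determined by $d \bmod (K_v^*)^2$, a fortiori by $d \bmod (K_v^*)^6$); define $\epsilon_v(d)$ to be the sum of $\delta_w(A/L,\sqrt d)$ over the (one or two) places $w \mid v$. For $w \nmid 6\infty$, $w$ lies over some $v \nmid 6\infty$ of $K$, and $T(\sqrt d)$ consists of those $w$ with $3 \nmid \ord_w(\sqrt d)$. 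The main computation is to show
\[
\legendre{-3}{T(\sqrt d)} = \legendre{-3}{U(d)} \cdot (\text{a factor absorbable into the } \epsilon_v),
\]
i.e.\ to compare the product of Legendre symbols over good places of $L$ with the product over good places of $K$ appearing in the statement. For a place $v \nmid 6\infty$ of $K$ with $v$ unramified in $L$: if $v$ splits, it has two places $w, w'$ above with $\ord_w(\sqrt d) = \ord_v(d)$, so $3 \nmid \ord_w(\sqrt d) \iff 3 \nmid \ord_v(d)$, and the two symbols $\legendre{-3}{w}\legendre{-3}{w'} = \legendre{-3}{v}^2 = 1$, contributing nothing; if $v$ is inert, $\ord_w(\sqrt d) = \ord_v(d)$ again (since $\sqrt d \in L$ forces... actually here $\sqrt d \notin L_w$ unless $v$ splits — care needed: when $v$ is inert in $L$, $\sqrt d$ still lies in $L$, so $\ord_w(\sqrt d)$ makes sense and equals $\ord_v(d)$ because $w/v$ is unramified), and $\legendre{-3}{w} = \legendre{-3}{v}^{f_w} = \legendre{-3}{v}^2 = 1$ again unless... hmm, $\legendre{-3}{w}$ for $w$ inert of degree 2 equals $N_{k_w/k_v}(-3)$-type symbol which is $\legendre{-3}{v}^2=1$. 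So the only places contributing to $\legendre{-3}{T(\sqrt d)}$ are those ramified in $L$, i.e.\ those $v$ with $\ord_v(d)$ odd; at such a $v$ the unique $w$ has $\ord_w(\sqrt d) = \frac{1}{2}(\ord_v(d) + \ord_w(\pi_v)) $-type relation — more carefully $w$ is ramified so $\ord_w(\sqrt{d}) = \ord_v(d) + \ord_w(\sqrt{u})$ for a unit adjustment, and $3 \nmid \ord_w(\sqrt d)$ translates to a condition on $\ord_v(d) \bmod 6$, which I expect to come out exactly as $\ord_v(d) \equiv 2, 4 \pmod 6$ after accounting for the ramification — the sign/parity bookkeeping here is what must be pinned down. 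Whatever discrepancy arises (e.g.\ from $v\mid \infty$ being complex, so that $L$ has a complex place above a real $v$ and the real-place contributions reorganize) gets folded into $\epsilon_v$, which we are free to define however is needed for $v \mid 6\infty$.

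The main obstacle is precisely this last bookkeeping: carefully tracking how $\ord_v(d) \bmod 6$ at a place ramifying in $L=K(\sqrt d)$ relates to $\ord_w(\sqrt d) \bmod 3$, and verifying that the resulting condition is $\ord_v(d) \equiv 2,4 \pmod 6$ (note $\ord_v(d)$ even would mean $v$ unramified, contributing trivially as shown, while $\ord_v(d)$ odd gives the ramified case; one checks $\ord_v(d)=1,5 \bmod 6$ versus $3 \bmod 6$ — and $3 \nmid \ord_v(d)$ among odd values means $\ord_v(d) \equiv 1, 5 \pmod 6$, not $2,4$, so there is genuinely a shift coming from the ramification that must be computed, likely $\ord_w(\sqrt d)$ relates to $\ord_v(d)$ by roughly doubling). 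A secondary point requiring care is the well-definedness of $\epsilon_v(d)$: one must check that the collection of places $w \mid v$ in $L$, and the values $\delta_w(A/L, \sqrt d)$, depend only on $d \bmod (K_v^*)^6$ — this follows because $L_w$ and the class of $\sqrt d$ in $L_w^*/(L_w^*)^6$ are determined by the $K_v$-isomorphism class of $K_v(\sqrt d)$ and the image of $d$ in it, both functions of $d \bmod (K_v^*)^6$ — together with the fact (from the Remark after Theorem \ref{main_thm}, suitably adapted) that $\delta_w$ is a function of the local twisting class. Once these are settled, collecting the $\epsilon_v$ and the single global Legendre symbol $\legendre{-3}{U(d)}$ yields the claimed formula.
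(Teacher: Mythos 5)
Your overall strategy --- reducing the sextic twist problem over $K$ to the cubic twist problem over $L := K(\sqrt{d})$ via the quadratic-twist rank identity and then organizing places of $L$ by the place of $K$ below --- is exactly the paper's, but your reduction step contains an algebra error, and this error is precisely what creates the ``shift'' you could not resolve. Since the quadratic twist of $E_d \colon y^2 = x^3 + d$ by $d$ is $E_{d^4}$, and the quadratic twist of $E$ by $d$ is $E_{d^3}$, the rank identity actually yields
\[
\rk(E_d/K) - \rk(E/K) = \bigl(\rk(E_d/L) - \rk(E/L)\bigr) - \bigl(\rk(E_{d^4}/K) - \rk(E/K)\bigr) + \bigl(\rk(E_{d^3}/K) - \rk(E/K)\bigr).
\]
Your claimed congruence $\rk(E_d/K) - \rk(E/K) \equiv \rk(E_d/L) - \rk(E/L) \pmod 2$ therefore implicitly asserts $\rk(E_{d^4}/K) \equiv \rk(E_{d^3}/K) \pmod 2$ for all $d$, which is false in general; equivalently, the step $\rk((E/L)_{\sqrt{d}}) - \rk(E/L) + \rk(E/L) - 2\rk(E/K) \equiv \rk(E_d/L) - \rk(E/L)$ forces $\rk(E/L)$ even.

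The two dropped terms are what close the gap you flagged. Your over-$L$ computation (which is otherwise essentially right, including the observation that split and inert $v$ contribute trivially) produces the product of $\legendre{-3}{v}$ over the \emph{ramified} $v \nmid 6\infty$ with $3 \nmid \ord_w(\sqrt{d})$; since for ramified $v$ one has simply $\ord_w(\sqrt{d}) = \ord_v(d)$ with $\ord_v(d)$ odd, this product runs over $\ord_v(d) \equiv 1, 5 \pmod 6$, not $\equiv 2, 4 \pmod 6$. The correction comes not from a subtler valuation comparison but from the $E_{d^4}$ term: it is the cubic twist of $A$ by $d^2$, so Proposition \ref{cubic_twists} gives it a contribution $\frac{1-\legendre{-3}{T(d)}}{2}$, where $T(d)$ runs over $v \nmid 6\infty$ with $3 \nmid \ord_v(d)$, i.e.\ $\ord_v(d) \equiv 1, 2, 4, 5 \pmod 6$. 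Using the elementary identity $\frac{1-a}{2} - \frac{1-ab}{2} \equiv \frac{1-b}{2} \pmod 2$ for $a, b \in \{\pm 1\}$, the $\ord_v(d) \equiv 1, 5$ part cancels and leaves exactly $\frac{1-\legendre{-3}{U(d)}}{2}$. The $E_{d^3}$ term is a quadratic twist and contributes only local invariants at $v \mid 6\infty$, which, like your $\eta_v$ and the $\delta_v(A, d^2)$ from the $E_{d^4}$ term, get folded into $\epsilon_v$. The rest of your argument (well-definedness of $\epsilon_v$ as a function of $d \bmod (K_v^*)^6$, the split/inert/ramified case analysis) matches the paper.
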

\begin{proof}
From
\begin{align*}
\rk(E_d/K) + \rk(E_{d^4}/K) &= \rk(E_d/K(\sqrt{d})) \\
\rk(E/K) + \rk(E_{d^3}/K) &= \rk(E/K(\sqrt{d})),
\end{align*}
we see that
\begin{align}
\label{eq_all_twists}
\rk(E_d/K) - \rk(E/K)
=& \Big(\rk(E_d/K(\sqrt{d}))-\rk(E/K(\sqrt{d}))\Big) \nonumber \\ &- \Big(\rk(E_{d^4}/K) - \rk(E/K)\Big) \nonumber \\ &+ \Big(\rk(E_{d^3}/K) - \rk(E/K)\Big).
\end{align}
By Proposition \ref{cubic_twists} and Lemma \ref{tau_to_3},
\begin{equation}
\label{eq_4th_powers}
\begin{aligned}
&\rk(E_{d^4}/K) - \rk(E/K) \equiv \sum_{v | 6\infty}  \delta_v(A, d^2) + \frac{1 - \legendre{-3}{T(d)}}{2} \pmod{2},
\end{aligned}
\end{equation}
where $T(d)$ is as in Proposition \ref{cubic_twists}.  By the quadratic twist case \cite[Proposition 7.2]{kbr}, there exist local invariants $\omega_v(d)$, depending only on the value of $d \pmod{(K_v^*)^2}$, such that
\begin{equation}
\label{eq_3rd_powers}
\rk(E_{d^3}/K) - \rk(E/K) \equiv \sum_{v | 6\infty} \omega_v(d) \pmod{2}.
\end{equation}

It remains to consider $\rk(E_d/K(\sqrt{d}))-\rk(E/K(\sqrt{d}))$.  Let $M_K$ and $M_{K(\sqrt{d})}$ denote the sets of all places of $K$ and $K(\sqrt{d})$, respectively.  By Theorem \ref{parity_twist_formula} and Lemma \ref{tau_to_3}, we have
\[
\rk(E_d/K(\sqrt{d}))-\rk(E/K(\sqrt{d})) \equiv \sum_{w \in M_{K(\sqrt{d})}} \delta_w(A/K(\sqrt{d}), d) \pmod{2}.
\]
To evaluate this sum, first let $v | 6\infty$, and suppose $c \equiv d \pmod{(K_v^*)^6}$.  Then the localizations of $K(\sqrt{c})$ and $K(\sqrt{d})$ at places lying above $v$ are the same.  Also, since $c \equiv d \pmod{(K_v^*)^3}$, we have $c \equiv d \pmod{(K(\sqrt{c})_w^*)^3}$ for any place $w$ of $K(\sqrt{c})$ lying above $v$.  Hence from the definition of the local invariants, we see that
\[
\delta_w(A/K(\sqrt{c}), c) = \delta_{w'}(A/K(\sqrt{d}), d)
\]
where $w$ is any place of $K(\sqrt{c})$ lying above $v$ and $w'$ is the corresponding place of $K(\sqrt{d})$.  Thus
\[
\eta_v(d) := \sum_{\substack{w \in M_K(\sqrt{d}) \\ w | v}} \delta_w(A/K(\sqrt{d}), d)
\]
is a local invariant depending only on the value of $d \pmod{(K_v^*)^6}$.

% Thus
% \begin{align*}
% &\rk(E_c/K(\sqrt{c}))-\rk(E/K(\sqrt{c})) \equiv \rk(E_d/K(\sqrt{d}))-\rk(E/K(\sqrt{d})) \pmod{2} \\
% \iff &\sum_{\substack{w \in M_{K(\sqrt{c})} \\ w \nmid 6\infty}} \delta_w(A/K(\sqrt{c}), c) \equiv \sum_{\substack{w \in M_{K(\sqrt{d})} \\ w \nmid 6\infty}} \delta_w(A/K(\sqrt{d}), d) \pmod{2}.
% \end{align*}

Next, by Lemmas \ref{unramified} and \ref{totally_ramified}, along with our explicit description of $A[\lambda]$,
\begin{align*}
&\sum_{\substack{w \in M_{K(\sqrt{d})} \\ w \nmid 6\infty}} \delta_w(A/K(\sqrt{d}), d) \equiv 0 \pmod{2} \\
\iff &\prod_{\substack{w \in M_{K(\sqrt{d})} \\ w \nmid 6\infty \\ 3 \nmid \ord_w(\sqrt{d})}} \legendre{-3}{w} = 1.
\end{align*}
We can rewrite this product as
\[
\prod_{\substack{v \in M_K \\ v \nmid 6\infty \\ 3 \nmid \ord_v(d)}} \left(\prod_{\substack{w \in M_{K(\sqrt{d})} \\ w | v}} \legendre{-3}{w}\right).
\]
To evaluate the interior products, break into cases depending on the factorization of $v$ in $K(\sqrt{d})$:
\begin{itemize}
 \item Case $v$ is split: then the two equal terms in the product cancel out.
 \item Case $v$ is unramified non-split: if $\legendre{-3}{v} = 1$, then $\legendre{-3}{w} = 1$.  Else $-3$ and $d$ are both quadratic non-residues modulo $v$, so adjoining a square root of $d$ to the residue field adjoins a square root of $-3$ as well.  Hence $\legendre{-3}{w} = 1$ in either case.
 \item Case $v$ is ramified: then since the extension of residue fields is trivial, $\legendre{-3}{w} = \legendre{-3}{v}$.
\end{itemize}
Then
\[
\prod_{\substack{v \in M_K \\ v \nmid 6\infty \\ 3 \nmid \ord_v(d)}} \left(\prod_{\substack{w \in M_{K(\sqrt{d})} \\ w | v}} \legendre{-3}{w}\right) = \prod_{\substack{v \in M_K \\ v \nmid 6\infty \\ \ord_v(d) \equiv 1, 5 \pmod{6}}} \legendre{-3}{v}.
\]
Hence
\begin{align}
\label{eq_quadratic}
&\rk(E_d/K(\sqrt{d}))-\rk(E/K(\sqrt{d})) \equiv \sum_{v | 6\infty} \eta_v(d) \pmod{2} \nonumber \\
&\iff \prod_{\substack{v \in M_K \\ v \nmid 6\infty \\ \ord_v(d) \equiv 1, 5 \pmod{6}}} \legendre{-3}{v} = 1.
\end{align}

Finally, combining (\ref{eq_4th_powers}), (\ref{eq_3rd_powers}), and (\ref{eq_quadratic}) with (\ref{eq_all_twists}) proves the claim, with
\[
\epsilon_v(d) := \eta_v(d) - \delta_v(A, d^2) + \omega_v(d).
\]
\end{proof}

As in the quartic twist case, we have the following useful fact.

\begin{prop}\label{mod_6_prop}
Let $v | 6$ be a place of $K$, and let $c, d \in K_v^*$.  Define the integer $m_v$ by
\[
m_v := \begin{cases} 2e_{v/2} + 1 &\mbox{if $v | 2$} \\ \lceil \frac{3}{2}e_{v/3} \rceil + 1 &\mbox{if $v | 3$,} \end{cases}
\]
where $e_{v/p}$ is the ramification index of $v$ over $p$.  Suppose that
\begin{itemize}
  \item $\ord_v(c) \equiv \ord_v(d) \pmod{6}$
  \item $c/(\pi_v^{\ord_v(c)})$ and $d/(\pi_v^{\ord_v(d)})$ have the same residues in $(\Or_v/v^{m_v})^*/((\Or_v/v^{m_v})^*)^6$.
\end{itemize}
Then $c \equiv d \pmod{(K_v^*)^6}$.
\end{prop}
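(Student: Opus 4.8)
The plan is to mimic the proof of Proposition~\ref{mod_4_prop}, reducing everything to the statement that a principal unit sufficiently close to $1$ is a sixth power, and then exploiting the factorization $6 = 2\cdot 3$ together with $\gcd(2,3)=1$. First I would reduce to the case that $c$ and $d$ are units in $\mathcal{O}_v$: the hypothesis $\ord_v(c)\equiv\ord_v(d)\pmod 6$ lets one absorb $\pi_v^{\ord_v(c)-\ord_v(d)}$ into a sixth power, and the second hypothesis then says that $c/d$ has trivial image in $(\mathcal{O}_v/v^{m_v})^*/((\mathcal{O}_v/v^{m_v})^*)^6$. Lifting a sixth root of its residue to a unit $b\in\mathcal{O}_v^*$, we are reduced to proving that every $u\in\mathcal{O}_v^*$ with $u\equiv 1\pmod{v^{m_v}}$ lies in $(K_v^*)^6$. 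Since $1=3\cdot 1-2\cdot 1$, any $u$ that is both a square $u=s^2$ and a cube $u=t^3$ satisfies $u=u^3u^{-2}=s^6t^{-6}=(s/t)^6$; hence it suffices to show $u\in(K_v^*)^2$ and $u\in(K_v^*)^3$ separately.

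Of the two exponents $2$ and $3$, the one coprime to the residue characteristic of $v$ is immediate from Hensel's lemma: because $m_v\ge 1$ we have $u\equiv 1\pmod v$, and the relevant polynomial $x^2-u$ or $x^3-u$ has derivative a unit at the approximate root $1$. For the other exponent I would use a binomial expansion exactly as in Proposition~\ref{mod_4_prop}. Write $u=1+\beta\pi_v^n$ with $\beta\in\mathcal{O}_v$ and $n\ge m_v$, and consider $(1+\beta\pi_v^n)^{1/2}=\sum_i\binom{1/2}{i}\beta^i\pi_v^{in}$ when $v\mid 2$, respectively $(1+\beta\pi_v^n)^{1/3}=\sum_i\binom{1/3}{i}\beta^i\pi_v^{in}$ when $v\mid 3$. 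The numerators $1(1-2)(1-4)\cdots$ and $1(1-3)(1-6)\cdots$ consist of factors coprime to $v$, so by Legendre's formula $\ord_v\binom{1/2}{i}\ge-(2i-1)e_{v/2}$ and $\ord_v\binom{1/3}{i}\ge-\tfrac{3i-1}{2}e_{v/3}$. Consequently the $i$-th term of the relevant series has valuation at least $i(n-2e_{v/2})+e_{v/2}$, respectively $i(n-\tfrac32 e_{v/3})+\tfrac{e_{v/3}}{2}$, which tends to $\infty$ precisely because $n\ge m_v=2e_{v/2}+1>2e_{v/2}$, respectively $n\ge m_v=\lceil\tfrac32 e_{v/3}\rceil+1>\tfrac32 e_{v/3}$. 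Hence the series converges in the complete field $K_v$, and by continuity of the ring operations its value is a square (resp.\ cube) root of $u$. When $v\mid 2$ this step is literally \cite[Lemma~4.1 and Corollary~4.3]{quadratic_twists}.

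The only real obstacle is the valuation bookkeeping in the case $v\mid 3$: one must verify $\ord_v\binom{1/3}{i}\ge-\tfrac{3i-1}{2}e_{v/3}$, which comes down to the bound $\ord_3(i!)\le\tfrac{i-1}{2}$, and one must note that convergence of the series requires the \emph{strict} inequality $n>\tfrac32 e_{v/3}$ --- this is exactly why $m_v$ is taken to be $\lceil\tfrac32 e_{v/3}\rceil+1$ rather than $\lceil\tfrac32 e_{v/3}\rceil$. Everything else is routine.
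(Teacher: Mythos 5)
Your proof is correct and follows essentially the same route as the paper: reduce to units, split $6 = 2\cdot 3$ using $\gcd(2,3)=1$, dispatch the exponent prime to the residue characteristic via Hensel's lemma, and handle the other exponent via the binomial series with the Legendre-formula valuation estimate (which in the paper is delegated to a citation of \cite[Lemma 4.1 and Corollary 4.3]{quadratic_twists} together with the remark about the ceiling). The valuation bookkeeping for $\binom{1/3}{i}$ that you carry out is exactly what the paper's cited modification is doing implicitly.
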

\begin{proof}
Without loss of generality, $c$ and $d$ are units in $\mc{O}_v$ which have the same residues in $(\Or_v/v^{m_v})^*/((\Or_v/v^{m_v})^*)^6$.  When $v | 2$ (respectively, $v | 3$), the assumption that $c$ and $d$ have the same residues in $(\Or_v/v)^* / ((\Or_v/v)^*)^3$ (respectively, $(\Or_v/v)^* / ((\Or_v/v)^*)^2$) implies that $c \equiv d \pmod{(K_v^*)^3}$ (respectively, $(K_v^*)^2$) by Hensel's Lemma.  The corresponding claim modulo $(K_v^*)^2$ (respectively, $(K_v^*)^3$) follows by \cite[Corollary 4.3]{quadratic_twists}, except that we have added a ceiling function in $m_v$ when $v | 3$ because we do not assume $\mu_3 \subset K$.
\end{proof}

\section{Example: $K = \mb{Q}$}\label{example_q}
The explicit conditions on $c$ and $d$ given in Propositions \ref{mod_4_prop} and \ref{mod_6_prop} allow us to easily classify the conjectural rank parities of quartic and sextic twists using existing computational tools.  To illustrate this, SAGE \cite{sage, pari} was used to classify the conjectural rank parities of all twists of $y^2 = x^3 + x$ and $y^2 = x^3 + 1$ over $\Q$.

For the quartic twists of $y^2 = x^3 + x$, we have $m_2 = 4$.  According to SAGE, we have the representatives
\[
(\Or_2/2^{4})^*/((\Or_2/2^{4})^*)^4 = \{1, 3, 5, 7, 9, 11, 13, 15\}.
\]
Then using Simon's two-descent algorithm, we get the local invariants shown in Fig.\ 1.  Note that these results agree with \cite[Proposition X.6.2]{silverman}.

\begin{figure}[h]
\centering
\fbox{
\begin{tabular}{r|r||c|c|c|c|c|c|c|c}
$\delta_2(A, d)$ & Unit at 2 & 1 & 3 & 5 & 7 & 9 & 11 & 13 & 15 \\ \hline
$\ord_2$ &&&&&&&&& \\ \hline \hline
0&& 0&1&1&0&0&0&1&1 \\ \hline
1&& 0&0&0&0&0&0&0&0 \\ \hline
2&& 0&0&1&1&1&0&0&1 \\ \hline
3&& 1&1&1&1&1&1&1&1
\end{tabular}
}
\fbox{
\begin{tabular}{r||c|c}
Sign & $+1$ & $-1$ \\ \hline
$\delta_\infty(A, d)$ & 0 & 1
\end{tabular}
}
\caption{Local invariants $\delta_v(A, d)$ for quartic twists of $y^2 = x^3 + x$ over $\Q$.  Each cell in the $\delta_2(A, d)$ table corresponds to one class in Proposition \ref{mod_4_prop}.  Due to the method used to compute these values, they are conditional on the Shafarevich-Tate conjecture.  Note $\rk(y^2 = x^3 + x / \Q) = 0$.\vspace{20pt}}
\end{figure}

For the sextic twists of $y^2 = x^3 + 1$, we have $m_2 = 3$ and $m_3 = 3$.  According to SAGE, we have the representatives
\[
(\Or_2/2^{3})^*/((\Or_2/2^{3})^*)^6 = \{1, 3, 5, 7\}
\]
\[
(\Or_3/3^{3})^*/((\Or_3/3^{3})^*)^6 = \{1, 2, 4, 5, 8, 16\}.
\]
Then using Simon's two-descent algorithm, we get the local invariants shown in Fig.\ 2.

\begin{figure}[h]
\centering
\fbox{
\begin{tabular}{r|r||c|c|c|c}
$\epsilon_2(d)$ & Unit at 2 & 1 & 3 & 5 & 7 \\ \hline
$\ord_2$ &&&&& \\ \hline \hline
0&& 0&1&0&1 \\ \hline
1&& 1&1&1&1 \\ \hline
2&& 0&1&0&1 \\ \hline
3&& 1&1&1&1 \\ \hline
4&& 1&1&1&1 \\ \hline
5&& 1&1&1&1
\end{tabular}
}
\fbox{
\begin{tabular}{r|r||c|c|c|c|c|c}
$\epsilon_3(d)$ & Unit at 3 & 1 & 2 & 4 & 5 & 8 & 16 \\ \hline
$\ord_3$ &&&&&&& \\ \hline \hline
0&& 0&0&0&1&0&1 \\ \hline
1&& 0&1&0&1&1&0 \\ \hline
2&& 1&0&1&0&0&1 \\ \hline
3&& 1&0&0&1&1&1 \\ \hline
4&& 0&1&0&1&1&0 \\ \hline
5&& 1&0&1&0&0&1
\end{tabular}
}

\fbox{
\begin{tabular}{r||c|c}
Sign & $+1$ & $-1$ \\ \hline
$\epsilon_\infty(d)$ & 0 & 1
\end{tabular}
}
\caption{Local invariants $\epsilon_v(d)$ for sextic twists of $y^2 = x^3 + 1$ over $\Q$.  Each cell in the $\epsilon_2(d)$ or $\epsilon_3(d)$ table corresponds to one class in Proposition \ref{mod_6_prop}.  Due to the method used to compute these values, they are conditional on the Shafarevich-Tate conjecture.  Note $\rk(y^2 = x^3 + 1 / \Q) = 0$.}
\end{figure}

% \begin{table}[p]
% \includegraphics[width=\textwidth]{quartic_and_sextic_twists__large_table_cropped.pdf}
% \caption{Conjectural rank parities for sextic twists of $y^2 = x^3 + 1$ by positive $d$ over $\Q$.  Each cell corresponds to one positive class in Corollary \ref{main_cor2}.  A cell's color indicates the rank parity for a positive representative $d$ of that class satisfying $\legendre{-1}{U(d)} = 1$.  For negative $d$, all values are flipped.  {\color[gray]{0.2} $\blacksquare$}$= 0$, {\color[gray]{0.8} $\blacksquare$}$= 1$.}
% \end{table}

\noindent{\bf Acknowledgments:} The author would like to thank Majid Hadian for providing mentorship on this project, as well as for his collaboration on a previous paper \cite{quadratic_twists} which inspired this one.  The author was partially supported by a Summer Undergraduate Research Fellowship from the California Institute of Technology's Student-Faculty Programs Office.

% \bibliography{quartic_and_sextic_twists_bib}{}
% \bibliographystyle{plain}

\end{document}